\numberwithin{equation}{section}
\newcommand \reg{\operatorname{reg}}
\newcommand \h{\operatorname{ht}}
\newcommand\oddgirth{\operatorname{odd-girth}}
\newtheorem{theorem}{Theorem}[section]
\newtheorem{definition}[theorem]{Definition}
\newtheorem{lemma}[theorem]{Lemma}
\newtheorem{example}[theorem]{Example}
\newtheorem{obs}[theorem]{Observation}
\newtheorem{question}[theorem]{Question}
\newtheorem{corollary}[theorem]{Corollary}
\newtheorem{setup}[theorem]{Set-up}
\begin{document}
\title[Regularity of powers of edge ideals of very well-covered
graphs]{Linear polynomial for the regularity of powers of edge ideals of very well-covered graphs}

\author{A. V. Jayanthan}
\address{Department of Mathematics, Indian Institute of Technology
Madras, Chennai, INDIA - 600036}
\email{jayanav@iitm.ac.in}
\author{S. Selvaraja}
\address{Department of Mathematics, Indian Institute of Technology
Madras, Chennai, INDIA - 600036}
\email{selva.y2s@gmail.com}

\keywords{Castelnuovo-Mumford regularity, Edge ideals, 
Very well-covered graphs}
\subjclass{AMS Classification 2010: 13D02, 13F20, 05C70, 05E40}

\maketitle

\begin{abstract}
 Let $G$ be a finite simple graph and $I(G)$ denote the corresponding edge ideal.
 In this paper we prove that if $G$ is a very well-covered graph then for all 
 $s \geq 1$
 the regularity of $I(G)^s$ is exactly $2s+\nu(G)-1$, where $\nu(G)$ denotes the induced
 matching number of $G$. 
\end{abstract}

\section{Introduction}

Let $R=K[x_1,\dots,x_n]$ be a polynomial ring over a field $K$ with
the standard grading (i.e., $\deg(x_i)=1$ for $i=1,\ldots,n$). The
Castelnuovo-Mumford regularity (or simply, regularity) of a finitely generated 
non-zero graded
$R$-module $M$, denoted by $\reg(M)$, is defined to be the least
integer $m$ for which we have for every $j$, the $j^{\textrm{th}}$
syzygy of $M$ is generated in degrees $\leq m+j$.  For a homogeneous
ideal $I$ of $R,$ the behavior of $I^{s}$ for $s\geq 2$ is studied
in various contexts.
It was proved by Cutkosky, Herzog and Trung, \cite{CHT},
and independently by Kodiyalam \cite{vijay}, that for a homogeneous
ideal $I$ in a polynomial ring, $\reg(I^s)$ is given by a linear
function for $s \gg 0$, i.e., there exist non-negative integers $a$,
$b$, $s_0$ such that  $$\reg(I^s) = as + b \text{ for all } s \geq
s_0.$$ They also proved that $a \leq \deg(I)$,
where $\deg(I)$ denotes the maximal degree of a minimal generator.
Finding exact values of $b$ and $s_0$ are non-trivial tasks, even
for monomial ideals, (see, for example, \cite{Conca}, \cite{Ha2}).
There have been some attempts on computing the exact form of this
linear function and the stabilization index $s_0$ for several classes
of ideals, see for example \cite{Berle}, \cite{Chardin2}, \cite{EH}, \cite{EU},
\cite{Ha}. In this paper, we obtain the linear polynomial
corresponding to the regularity of powers of edge ideals of very
well-covered graphs.

Let $G$ be a finite simple (no loops, no multiple edges) undirected graph on the vertex set
$V(G)=\{x_1,\ldots,x_n\}$. Let $K[x_1,\ldots,x_n]$ be the polynomial ring in $n$ variables,
where $K$ is a field. Then the ideal $I(G)$ generated by $\{x_ix_j \mid \{x_i,x_j\} \in E(G)\}$
is called the \textit{edge ideal} of $G$. 
For a graph $G$, there exist integers $b$ and 
$s_0$ such that $\reg(I(G)^s) = 2s + b$ for all $s \geq s_0$.
 Our objective in this paper is to 
find $b$ and $s_0$, for certain class of graphs, in terms of
combinatorial invariants of the graph $G$.
Regularity of edge ideals and
their powers have been studied by several authors and bounds on regularity have been
computed, (see, for example,
\cite{banerjee1}, \cite{ABS}, \cite{banerjee}, \cite{selvi_ha}, \cite{hhz},
\cite{jayanthan}, \cite{moghimian_seyed_yassemi}, \cite{nevo_peeva}, \cite{NSY17}, \cite{selva}).

In \cite{GV05}, Gitler and Valencia proved that if $G$ is a well-covered graph without isolated vertices, then
$\h(I(G)) \geq \frac{|V(G)|}{2}$. In this paper, we consider the class of graphs for which
the above inequality is an equality, namely \textit{very well-covered
graphs}.
The Cohen-Macaulayness, regularity and projective dimension of
very well-covered graphs have already been looked into by several
authors, \cite{crupi}, \cite{KTY}, \cite{mohammad}, \cite{NSY17}, \cite{Fakhari}.
Since the class of very well-covered graphs
contains unmixed bipartite graphs, whiskered graphs and grafted graphs
(see \cite{crupi}, \cite{Faridi05}), it is
interesting in the algebraic sense as well.

The regularity of powers of edge ideals of 
unmixed bipartite graphs have been studied by Jayanthan et al., \cite{jayanthan}.
They showed that if $G$ is an unmixed bipartite graphs, then $\reg(I(G)^s)=2s+\nu(G)-1$ for
all $s \geq 1$.
Mahmoudi et al., \cite{mohammad}, showed that for a very well-covered graph $G$, 
$\reg(I(G))=\nu(G)+1$, where $\nu(G)$ denotes the induced matching number of $G$. 
Since unmixed bipartite graphs are very well-covered
graphs,  it is natural to ask if the same result generalizes to 
very well-covered graphs. Recently, Norouzi et al. showed that if $G$
is a very well-covered graph with $\oddgirth(G) \geq 2k+1$, then $\reg(I(G)^s)=2s+\nu(G)-1$, 
for $1 \leq s \leq k-2$, \cite{NSY17}.

The main result of this paper is the following:
\begin{theorem} (Theorem \ref{main})
 Let $G$ be a very well-covered graph. Then
  for all $s \geq 1$, 
  \[\reg(I(G)^s)=2s+\nu(G)-1.\]
\end{theorem}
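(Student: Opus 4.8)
The plan is to prove the two inequalities $\reg(I(G)^s)\ge 2s+\nu(G)-1$ and $\reg(I(G)^s)\le 2s+\nu(G)-1$ separately, inducting on $s$ for the upper bound. The lower bound is the soft direction: an induced matching of size $\nu(G)$ already forces regularity from below, and combined with the monotonicity estimate $\reg(I(G)^{s+1})\ge \reg(I(G)^s)+2$ together with the $s=1$ inequality $\reg(I(G))\ge \nu(G)+1$, one gets $\reg(I(G)^s)\ge 2s+\nu(G)-1$ for all $s$. This holds for arbitrary graphs, so I would record it quickly and concentrate on the upper bound.

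For the upper bound I would induct on $s$. The base case $s=1$ is exactly the theorem of Mahmoudi et al.\ that $\reg(I(G))=\nu(G)+1$ for very well-covered $G$. For the inductive step the main tool is Banerjee's colon inequality for edge ideals, $\reg(I^{s+1})\le \max\{\reg(I^s),\ \max_m\big(\reg(I^{s+1}:m)+2\big)\}$, where $I=I(G)$ and $m$ ranges over the minimal monomial generators of $I^s$ (the squarefree products of $s$ edges). By the inductive hypothesis $\reg(I^s)\le 2s+\nu(G)-1\le 2(s+1)+\nu(G)-1$, so the first term is harmless. The key observation is that, because we only need $\reg(I^{s+1}:m)+2\le 2(s+1)+\nu(G)-1=2s+\nu(G)+1$, it suffices to prove the \emph{$s$-independent} bound
\[
\reg\big(I(G)^{s+1}:m\big)\le \nu(G)+1,
\]
since $\nu(G)+1\le 2s+\nu(G)-1$ for every $s\ge 1$. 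This is the statement I would isolate as the central lemma.

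The heart of the argument is then analyzing the colon ideal. By Banerjee's structure theorem, $(I^{s+1}:m)$ is again generated in degree two, so it is the edge ideal $I(G_m)$ of a graph $G_m$ on the same vertex set as $G$, with $E(G_m)=E(G)\cup\{uv: u,v\text{ even-connected with respect to }m\}$. Thus I must show $\reg(I(G_m))\le \nu(G)+1$. My strategy is to exploit the structure theorem for very well-covered graphs: there is a labelling $V(G)=\{x_1,\dots,x_h,y_1,\dots,y_h\}$ with each $\{x_i,y_i\}\in E(G)$, with $\{x_1,\dots,x_h\}$ a minimal vertex cover, and with a triangular compatibility condition on the remaining edges. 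I would prove a combinatorial lemma showing that the even-connection edges are forced by this ordering to sit in positions that keep $G_m$ very well-covered up to isolated vertices (which do not affect regularity) and that satisfy $\nu(G_m)\le\nu(G)$; the inequality on induced matchings is plausible because even-connection only \emph{adds} edges among vertices already linked through $m$, and such edges tend to destroy rather than create induced matchings. Applying the very-well-covered case of the theorem (i.e.\ $\reg(I(G_m))=\nu(G_m)+1$) to $G_m$ then yields $\reg(I(G_m))\le\nu(G)+1$, closing the induction.

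The hard part will be precisely this control of $G_m$. In the bipartite case even-connection preserves the bipartition, so Jayanthan et al.'s unmixed-bipartite theorem applies directly to $G_m$; here $G_m$ need not be bipartite, so I cannot simply quote that result and must instead verify by hand that the triangular very-well-covered structure is inherited by $G_m$ after deleting vertices that become isolated. I expect the bulk of the work to be this structural/combinatorial lemma, together with the bookkeeping that tracks how each minimal generator $m$ of $I^s$ interacts with the labelling. The remaining points—confirming Banerjee's degree-two hypothesis for the colon ideals and disposing of degenerate configurations—should be routine once the structural lemma is established.
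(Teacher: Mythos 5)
Your overall skeleton matches the paper exactly: the lower bound is the known induced-matching bound, the upper bound goes by induction on $s$ with base case the Mahmoudi et al.\ theorem, the inductive step uses Banerjee's inequality, and everything reduces to the $s$-independent key lemma $\reg\big(I(G)^{s+1}:m\big)\le \nu(G)+1$. This is precisely the paper's Theorem \ref{main-techlemma}, and your isolation of it as the central statement is the right move.

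However, your plan for proving that key lemma has a genuine gap, and it is located exactly where you predict the ``hard part'' to be. You claim $(I^{s+1}:m)$ is the edge ideal of a graph $G_m$ on the same vertex set, and that a structural lemma will show $G_m$ stays very well-covered (up to isolated vertices) with $\nu(G_m)\le\nu(G)$. This is false in general: Banerjee's structure theorem allows generators $u^2$ when a vertex $u$ is even-connected to \emph{itself} with respect to $m$, so the colon ideal need not be squarefree, and after polarizing (which introduces new vertices, so you are no longer on the same vertex set) the resulting graph need \emph{not} be very well-covered. The paper's Example \ref{squarefree_example} is an explicit counterexample: $G$ is very well-covered, $x_4^2,y_3^2\in(I^2:x_1x_2)$, and the graph of the polarization has minimal vertex covers of different sizes. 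Your structural lemma is true precisely in the squarefree case (this is the paper's Theorem \ref{g=g'} and Corollary \ref{g'=verywell}, combined with $\nu(G')\le\nu(G)$), but the non-squarefree case cannot be handled by it. The paper instead runs a second, separate induction on the number of vertices even-connected to themselves, splitting $I(G')$ by short exact sequences over the neighbors of such a vertex and controlling each piece through delicate induced-subgraph lemmas (Lemmas \ref{tech_lemma} and \ref{ind_th_lemma}); this occupies all of Section \ref{main_result} and is the real content of the paper, not a ``routine'' disposal of degenerate configurations.

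A smaller point: your lower-bound argument invokes a monotonicity estimate $\reg(I^{s+1})\ge\reg(I^s)+2$, which is not a known general fact for edge ideals. What is true (and what the paper cites) is the direct inequality $\reg(I(G)^s)\ge 2s+\nu(G)-1$ for every graph and every $s\ge 1$, due to Beyarslan, H\`a and Trung; you should quote that rather than derive it from monotonicity.
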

Therefore, for this class of graphs, we have $b=\nu(G)-1$ and $s_0=1$.
As an immediate consequence, we get that the above equality holds for
an unmixed bipartite graphs and whiskered graphs.

Our paper is organized as follows. In Section \ref{Preliminaries}, we
collect the necessary notation, terminology and some results that are
used in the rest of the paper. The main tool in obtaining
$\reg(I(G)^{s+1})$ is a result of Banerjee which gives an upper bound
on $\reg(I(G)^{s+1})$ in terms of $\reg((I(G)^{s+1} : M))$ and
$\reg(I(G)^s)$, where $M$ is minimal generator of $I(G)^s$. In Section \ref{squarefree}, we prove that the 
regularity of $(I(G)^{s+1}:M)$ is bounded above by $\nu(G) + 1$, when $(I(G)^{s+1}:M)$ is squarefree.
We study the case when $(I(G)^{s+1}:M)$ has square monomial generators
in Section \ref{main_result} and show that in this case also,
the regularity is bounded above by $\nu(G) + 1$. 
Using these upper bounds, we prove our main result. 

\section{Preliminaries}\label{Preliminaries}
Throughout this paper, $G$ denotes a finite simple graph without
isolated vertices. 
For a graph $G$, $V(G)$ and $E(G)$ denote the set of all
vertices and the set of all edges of $G$ respectively.
A subgraph $H \subseteq G$  is called \textit{induced} if for $u, v
\in V(H)$, $\{u,v\} \in
E(H)$ if and only if $\{u,v\} \in E(G)$.
For $\{u_1,\ldots,u_r\}  \subseteq V(G)$, let $N_G(u_1,\ldots,u_r) = \{v \in V (G)\mid \{u_i, v\} \in E(G)~ 
\text{for some $1 \leq i \leq r$}\}$
and $N_G[u_1,\ldots,u_r]= N_G(u_1,\ldots,u_r) \cup \{u_1,\ldots,u_r\}$. 
For $U \subseteq V(G)$, define $G \setminus U$ to
be the induced subgraph of $G$ on the vertex set $V(G) \setminus U$.

A \textit{matching} in a graph $G$ is a collection of pairwise
disjoint edges. A matching $M$ of a graph $G$ is called an
\textit{induced matching} if no two edges of $M$ are joined by an edge
of $G$.  The largest size of an induced matching in $G$ is called its
\textit{induced matching number} and denoted by $\nu(G)$. A subset $X$
of $V(G)$ is called an \textit{independent set} if $\{x, y\} \notin
E(G)$ for $x, y \in X$. An independent set is said to be a
\textit{maximal independent set} if it is maximal, with respect to
inclusion, among the independent sets.

A subset $M \subseteq V(G)$ is a \textit{vertex cover}  of
$G$ if for each $e \in E(G)$, $e\cap M \neq \emptyset$.  If $M$ is minimal
with respect to inclusion, then $M$ is called a \textit{minimal vertex
cover} of $G$.  A graph $G$ is called \textit{unmixed} (also called
\textit{well-covered}) if all minimal vertex covers of $G$ have the same
number of elements. 

A graph $G$ is called \textit{very well-covered} if it is unmixed without isolated vertices and 
with $\h(I(G))=\frac{|V(G)|}{2}$. The following is a useful result on
very well-covered graphs that allow us to assume certain order on
their vertices and edges.

\begin{lemma}\cite[Corollary 3.2]{gitler}\label{very_well_pro} Let $G$ be a very well-covered graph with $2h$ vertices. Then 
there is a relabeling of vertices $V(G)=\{x_1,\ldots,x_h , y_1,\ldots,y_h\}$ such that the following two conditions hold:
\begin{enumerate}
 \item $X = \{x _1,\ldots,x_h\}$ is a minimal vertex cover of $G$ and $Y = \{y_1,\ldots,y_h\}$ is a maximal independent set of $G$;
 \item For all $1 \leq i \leq h$, $\{x_i,y_i\} \in E(G)$.
\end{enumerate}
\end{lemma}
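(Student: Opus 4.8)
The plan is to reduce the statement to the existence of a single perfect matching and then read the labelling off from it.

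\emph{Reformulating the hypothesis.} Since $\h(I(G))$ equals the minimum cardinality of a vertex cover of $G$ and $G$ is unmixed, every minimal vertex cover has exactly $h$ vertices; by Gallai's identity $\alpha(G)+\tau(G)=|V(G)|$ its complement is a maximal independent set, also of size $h$. So I would fix one such pair, writing $X=\{x_1,\dots,x_h\}$ for a minimal vertex cover and $Y=V(G)\setminus X$ for the complementary maximal independent set. Condition (1) then holds for this choice by construction, and the entire content of the lemma is to relabel $Y$ so that (2) holds, i.e. to pair each $x_i$ with a neighbour $y_i\in Y$.

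\emph{Reduction to a perfect matching.} I claim the lemma follows as soon as one knows that $G$ has a perfect matching $M$. Indeed, because $Y$ is independent, no edge of $M$ has both endpoints in $Y$; as $M$ is perfect and $|Y|=h=\tfrac12|V(G)|$, all $h$ vertices of $Y$ are matched, necessarily to vertices of $X$, and these exhaust $X$. Hence $M$ consists \emph{entirely} of crossing edges and gives a bijection $Y\leftrightarrow X$; relabelling $y_i$ to be the $M$-partner of $x_i$ yields $\{x_i,y_i\}\in E(G)$ for all $i$, which is (2). It therefore suffices to prove that a very well-covered graph has a perfect matching, and I stress that this reduction is what makes the internal edges of $X$ harmless: equal part-sizes plus independence of $Y$ force any perfect matching to cross.

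\emph{The crux: producing the matching.} I would argue by contradiction, using the well-covered hypothesis in the sharp form ``every maximal independent set has exactly $h$ vertices.'' Suppose $G$ has no perfect matching and let $M$ be a maximum matching with exposed set $W$. Maximality forces $W$ to be independent (two adjacent exposed vertices give an augmenting edge), and $W\neq\emptyset$ with $|W|$ even and at least $2$; since $W$ is independent it embeds in a maximal independent set $Y$ of size $h$. Setting $X=V(G)\setminus Y$, all exposed vertices lie in $Y$, so $X$ is fully matched, and—$Y$ being independent—$M$ splits into crossing edges together with $\tfrac12|W|$ edges internal to $X$. The aim is to convert this surplus of exposed vertices into a contradiction: either exhibit an $M$-alternating path joining two exposed vertices, contradicting maximality of $M$, or swap along alternating paths to build an independent set of size larger than $h$, contradicting $\alpha(G)=h$.

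\emph{The expected obstacle.} The hard part will be exactly this last matching-theoretic step. Naive exchanges — e.g. replacing the neighbourhood $N_G(S)$ of a set $S\subseteq Y$ by $S$ itself — fail, because a minimal vertex cover of a very well-covered graph can carry many internal edges (already the triangle inside a whiskered triangle), so one cannot conclude by verifying Hall's condition on the bipartite graph of crossing edges alone. The essential role of the \emph{full} well-covered hypothesis (all maximal independent sets equal in size, not merely $\alpha(G)=\tfrac12|V(G)|$) is to control these internal edges, and I expect to need the Gallai–Edmonds structure of $M$: a positive deficiency would produce factor-critical components whose independent sets, combined with $Y$, either admit an augmenting path or overshoot $h$. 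As an alternative I would keep in reserve an induction on $h$, deleting a carefully chosen edge $\{x,y\}$ with $y\in Y$ and verifying that very-well-coveredness is inherited by $G\setminus\{x,y\}$; there the delicate point is the selection of the edge to delete. Once the perfect matching is secured, the first two steps finish the proof.
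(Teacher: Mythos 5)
Your opening reduction is correct and cleanly stated: by unmixedness and $\h(I(G))=h$, every minimal vertex cover $X$ has size $h$ and its complement $Y$ is a maximal independent set of size $h$; and since $Y$ is independent with $|Y|=\tfrac12|V(G)|$, any perfect matching of $G$ must match $Y$ bijectively into $X$ along crossing edges, so relabelling along the matching yields (1) and (2). The problem is that everything after this reduction is a plan rather than a proof. The existence of a perfect matching in a very well-covered graph \emph{is} the entire content of the cited statement (it is essentially Favaron's theorem on very well-covered graphs, which is what \cite[Corollary 3.2]{gitler} packages); note that the paper itself does not prove this lemma but imports it from the literature. Your sketch gets as far as: a maximum matching with nonempty exposed set $W$ has $W$ independent, $W$ can be pushed into a maximal independent set $Y$ of size $h$, and then $M$ consists of crossing edges plus $\tfrac12|W|$ edges inside $X$. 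The step that would produce a contradiction from this configuration --- an augmenting path, or an independent set of size exceeding $h$ --- is exactly where the full well-covered hypothesis must be used, and you leave it at ``the aim is to convert this surplus into a contradiction,'' followed by candidate strategies (Gallai--Edmonds, induction on $h$) that you yourself flag as unexecuted. That is a genuine gap, and it is the crux, not a technicality.

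To gauge what is missing: the standard route is to verify Hall's condition for the bipartite graph of crossing edges, i.e.\ $|N_G(T)|\geq |T|$ for every $T\subseteq Y$. Such $T$ is automatically independent (so the internal edges of $X$ do not even enter the statement); what they do break, as you correctly observe, is the naive exchange argument, since $(Y\setminus T)\cup N_G(T)$ need not be a vertex cover when $X\setminus N_G(T)$ contains edges. A working verification has to exploit well-coveredness structurally, for instance via the fact that $G\setminus N_G[I]$ is again well-covered with independence number $\alpha(G)-|I|$ for any independent set $I$, combined with the Gitler--Valencia inequality $\alpha\leq \tfrac12|V|$ applied to these induced subgraphs --- and one must then handle the isolated vertices this deletion can create, which is where the argument becomes delicate. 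None of this is routine, so your proposal, while pointed in a reasonable direction and correct in what it does prove, does not establish the lemma.
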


The concept of \textit{even-connectedness} was introduced by
Banerjee in \cite{banerjee}. This has emerged as a fine tool in the
inductive process of computation of asymptotic regularity.
We recall the definition and some of its important
properties from \cite{banerjee}.

\begin{definition}\label{even_connected} Let $G$ be a graph. Two vertices $u$ and $v$ ($u$ may be same as $v$) are said to be even-connected with respect to an $s$-fold products 
$e_1\cdots e_s$, where $e_i$'s are edges of $G$, not necessarily distinct, if there is a path $p_0p_1\cdots p_{2k+1}$, $k\geq 1$ in $G$ such that:
\begin{enumerate}
 \item $p_0=u,p_{2k+1}=v.$
 \item For all $0 \leq \ell \leq k-1,$ $p_{2\ell+1}p_{2\ell+2}=e_i$ for some $i$.
 \item For all $i$, $ \mid\{\ell \geq 0 \mid p_{2\ell+1}p_{2\ell+2}=e_i \}\mid
   ~ \leq  ~ \mid \{j \mid e_j=e_i\} \mid$.
 \item For all $0 \leq r \leq 2k$, $p_rp_{r+1}$ is an edge in $G$.
\end{enumerate}
\end{definition}

The next theorem describes the minimal generators of the ideal $(I(G)^{s+1}:M)$, where $M$ is minimal generator of $I(G)^s$ for $s\geq 1$.
\begin{theorem}\label{even_connec_equivalent}\cite[Theorem 6.1 and Theorem 6.7]{banerjee} Let $G$ be a graph with edge ideal
$I = I(G)$, and let $s \geq 1$ be an integer. Let $M$ be a minimal generator of $I^s$.
Then $(I^{s+1} : M)$ is minimally generated by monomials of degree 2, and $uv$ ($u$ and $v$ may
be the same) is a minimal generator of $(I^{s+1} : M )$ if and only if either $\{u, v\} \in E(G) $ or $u$ and $v$ are even-connected with respect to $M$.
 \end{theorem}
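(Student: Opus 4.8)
The plan is to prove the two stated directions separately and to extract the degree-$2$ bound along the way. I would first dispatch the ``if'' direction, which is constructive. If $\{u,v\}\in E(G)$ then $uv\in I(G)$ and hence $uv\cdot M\in I(G)\cdot I(G)^{s}=I(G)^{s+1}$, so $uv\in(I^{s+1}:M)$. If instead $u,v$ are even-connected with respect to $M$ through a path $p_0\cdots p_{2k+1}$ as in Definition \ref{even_connected}, I would telescope the edges along the path: the ``even-start'' edges $p_0p_1,p_2p_3,\dots,p_{2k}p_{2k+1}$ are $k+1$ edges of $G$ whose product equals $uv$ times the product of the ``odd-start'' edges $p_1p_2,\dots,p_{2k-1}p_{2k}$, and the latter are $k$ of the $e_i$ occurring in $M$, with multiplicities bounded by those in $M$ by condition (3). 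Cancelling these $k$ edges from $M$ leaves a product of $s-k$ edges, so $uv\cdot M$ becomes a product of $(k+1)+(s-k)=s+1$ edges, i.e. lies in $I(G)^{s+1}$.

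Next I would establish that $(I^{s+1}:M)$ is generated in degree $2$; this is the step I expect to be the main obstacle. A monomial of degree $1$ cannot lie in the colon ideal for degree reasons, so it suffices to show that every monomial $w\in(I^{s+1}:M)$ with $\deg w\ge 3$ is divisible by a degree-$2$ element of the ideal. Writing $wM=f_1\cdots f_{s+1}\,c$ with the $f_j$ edges of $G$, I would induct on $\deg w$. If some variable divides both $w$ and $c$, cancelling it shows that $w$ divided by that variable still lies in the colon ideal, and induction applies. Otherwise $\gcd(w,c)=1$ forces $c\mid M$, and $w\cdot(M/c)=f_1\cdots f_{s+1}$, so $w$ divides a product of $s+1$ edges. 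If some $f_j$ divides $w$ then that edge is a degree-$2$ divisor of $w$ in the ideal; if not, I would build an alternating trail starting at a variable of $w$, alternating edges of the $f_j$ (call them red) with edges of $M$ (blue), terminating when a red edge lands back on a variable of $w$. Such a terminus yields an odd, red--blue--red alternating path between two variables of $w$, i.e. an even connection, hence a degree-$2$ divisor of $w$ in the ideal. The delicate point, and the crux of this step, is proving that the trail can always be continued with the correct colour and must eventually terminate at a variable of $w$ after a red edge, while respecting the multiplicity bound in condition (3); this is an Eulerian/parity argument exploiting that the only net imbalance between red and blue degrees is the surplus of two sitting on the variables of $w$.

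Finally, with generation in degree $2$ in hand, I would prove the ``only if'' direction for a degree-$2$ generator $uv$ with $\{u,v\}\notin E(G)$. Here a degree count is decisive: since $\deg(uv\cdot M)=2s+2=\deg(f_1\cdots f_{s+1})$, the cofactor $c$ is forced to be $1$, giving the exact identity $uv\cdot e_1\cdots e_s=f_1\cdots f_{s+1}$. I would read this as two $2$-coloured matchings on the same multiset of vertices, the blue edges $e_1,\dots,e_s$ together with the special blue ``edge'' $uv$ against the red edges $f_1,\dots,f_{s+1}$, and overlay them into a multigraph in which every vertex has equal red and blue degree. Such a multigraph decomposes into closed walks alternating in colour; the walk through the special blue edge $uv$, cut open at that edge, is precisely an alternating path from $u$ to $v$ that begins and ends with edges of $G$ (the red $f_j$) and whose interior blue edges are among the $e_i$ with the required multiplicity bound. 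Since $\{u,v\}\notin E(G)$ this walk has length at least $4$, so $k\ge 1$, and it realises the even connection demanded by Definition \ref{even_connected}; the degenerate case $u=v$ corresponds to opening a blue loop at $u$ and is handled identically.
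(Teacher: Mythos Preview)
The paper does not supply its own proof of this statement: Theorem~\ref{even_connec_equivalent} is quoted from \cite[Theorems 6.1 and 6.7]{banerjee} and is used as a black box throughout, so there is no argument in the paper against which to compare your attempt.

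That said, your outline is the right one and is essentially Banerjee's original approach. The ``if'' direction and the ``only if'' direction for a degree-$2$ generator are both correctly handled by your telescoping/Eulerian alternating-walk arguments; in particular, the overlay of the blue multiset $\{e_1,\dots,e_s,uv\}$ against the red multiset $\{f_1,\dots,f_{s+1}\}$ is exactly how one extracts the even-connection path, and your observation that the cofactor $c$ vanishes by a degree count is the key simplification. For the degree-$2$ generation step you have correctly identified the crux (extending the alternating trail while respecting the multiplicity condition~(3)) and reduced to it, but you have not actually carried it out; the parity/Eulerian bookkeeping there is genuinely the nontrivial content of \cite[Theorem~6.1]{banerjee}, and a complete proof would need to make that termination argument precise. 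Once generation in degree~$2$ is established, note also that minimality of any degree-$2$ monomial in the ideal is automatic, since the colon ideal contains no linear forms.
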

%
%
Polarization is a process to obtain a squarefree monomial ideal from a
given monomial ideal. 
\begin{definition}\label{pol_def}
Let $M=x_1^{a_1}\cdots x_n^{a_n}$ be a monomial in  $R=K[x_1,\dots,x_n]$. Then we define the squarefree monomial $P(M)$ ({\it polarization} of $M$) as 
$$P(M)=x_{11}\cdots x_{1a_1}x_{21}\cdots x_{2a_2}\cdots x_{n1}\cdots x_{na_n}$$ in the polynomial ring $S=K[x_{ij} \mid 1\leq i\leq n,1\leq j\leq a_i]$.
If $I=(M_1,\dots,M_q)$ is an ideal in $R$, then the polarization of
$I$, denoted by $\widetilde{I}$, is defined as $\widetilde{I}=(P(M_1),\dots,P(M_q))$.
\end{definition}
For various properties of polarization, we refer the reader to \cite{Herzog'sBook}.
In this paper, we repeatedly use one of the important properties of the
polarization, namely:
\begin{corollary} \cite[Corollary 1.6.3(a)]{Herzog'sBook}\label{pol_reg} Let $I$ be a monomial ideal in $K[x_1, \ldots, x_n].$ Then
$$\reg(I)=\reg(\widetilde{I}).$$
\end{corollary}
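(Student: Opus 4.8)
The plan is to derive the equality from the defining property of polarization together with the fact that regularity is unchanged upon passing to a quotient by a linear nonzerodivisor. Since $\reg(I)=\reg(R/I)+1$ and $\reg(\widetilde{I})=\reg(S/\widetilde{I})+1$, it suffices to prove that $\reg_R(R/I)=\reg_S(S/\widetilde{I})$.

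The key structural input is the standard \emph{depolarization} property. Writing $a_i$ for the highest power to which $x_i$ occurs in a minimal generator of $I$, one introduces variables $x_{i1},\dots,x_{ia_i}$ and the linear forms $\theta_{ij}:=x_{ij}-x_{i,j-1}$ for $2\le j\le a_i$ (ranging over all $i$). I would first verify that these $\sum_i(a_i-1)$ linear forms constitute a regular sequence on $S/\widetilde{I}$ and that, under the identification $x_{i1}\mapsto x_i$, one has $S/(\widetilde{I}+(\theta_{ij}))\cong R/I$: indeed, successively imposing $x_{ij}=x_{i,j-1}$ collapses each squarefree generator $P(M)=\prod_i\prod_{j=1}^{a_i}x_{ij}$ back to $\prod_i x_{i1}^{a_i}=M$.

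Next I would record the regularity-preservation lemma: if $\theta$ is a homogeneous linear form that is a nonzerodivisor on a finitely generated graded module $N$, then $\reg(N/\theta N)=\reg(N)$. This follows by applying the standard short-exact-sequence estimates for regularity to
\[0\longrightarrow N(-1)\xrightarrow{\ \theta\ } N\longrightarrow N/\theta N\longrightarrow 0,\]
which give simultaneously $\reg(N/\theta N)\le\reg(N)$ and $\reg(N)\le\reg(N/\theta N)$, forcing equality. Applying this lemma iteratively along the regular sequence $\{\theta_{ij}\}$ then yields $\reg_S(S/\widetilde{I})=\reg_R(R/I)$, which is what we wanted.

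The routine part is the regularity-preservation lemma, which is a formal consequence of the long exact sequence in local cohomology (or of the behaviour of $\Tor$ under a linear nonzerodivisor). I expect the genuine obstacle to be the first step: proving that the differences $\theta_{ij}$ form a regular sequence on $S/\widetilde{I}$. This is the combinatorial heart of polarization and is most cleanly handled by analyzing the Stanley--Reisner/primary-decomposition structure of the squarefree ideal $\widetilde{I}$, showing that at each stage the next difference avoids all associated primes of the current quotient so that it remains a nonzerodivisor.
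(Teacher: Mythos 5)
The paper offers no proof of this statement at all: it is quoted directly from Herzog--Hibi \cite[Corollary 1.6.3(a)]{Herzog'sBook}. Your argument is, in essence, a reconstruction of the proof given in that source (their Proposition 1.6.2 plus the standard reduction): the differences $\theta_{ij}=x_{ij}-x_{i,j-1}$ form a regular sequence on $S/\widetilde{I}$, quotienting by them recovers $R/I$, and regularity is invariant under quotienting by a linear nonzerodivisor. Your regularity-preservation lemma is correct as stated -- the two short-exact-sequence estimates applied to $0\to N(-1)\xrightarrow{\theta} N\to N/\theta N\to 0$ give both inequalities -- though to iterate you should also record the standard fact that $\reg$ of $N/\theta N$ is the same whether computed over $S$ or over $S/(\theta)$, so that the final quotient's regularity is indeed $\reg_R(R/I)$. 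The one piece you defer rather than prove, the regular-sequence property itself, is correctly identified as the combinatorial heart, and your sketch of it is the right one: each successive quotient is again a polynomial ring modulo a monomial ideal (a partial depolarization), its associated primes are monomial primes, and a binomial $x_{ij}-x_{i,j-1}$ lies in a monomial prime iff both variables do, so one must check that no associated prime of the current partial depolarization contains both; that check is exactly Herzog--Hibi's Proposition 1.6.2. So your proposal is sound and follows the same route as the cited source; it is a plan with the key lemma flagged rather than a self-contained proof.
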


\section{Bounding the regularity: The squarefree monomial case}\label{squarefree}
We obtain the asymptotic expression for the regularity by using
induction and \cite[Theorem 5.2]{banerjee} which says that
$\reg(I(G)^{s+1}) \leq \underset{M}{\max}\{\reg(I(G)^{s+1}: M) + 2s,
\reg(I(G)^s)\}$, where $M$ is a minimal monomial generator of
$I(G)^s$. For this purpose, one needs to compute $\reg(I(G)^{s+1} :
M)$.  In this section, we obtain an upper bound for the regularity of
$(I(G)^{s+1} : M)$, when $(I(G)^{s+1} : M)$ is squarefree. We first
fix certain set-up for the class of graphs that we consider throughout
this paper.

\begin{setup}\label{setup_well} Let $G$ be a graph with $2h$ vertices,
none of which are isolated and $V(G)=X \cup Y $,
where $X = \{x_1,\ldots,x_h\}$ is a minimal vertex cover of $G$ and  $Y = \{y_1,\ldots,y_h\}$ is a maximal independent set of $G$
such that $\{x_i,y_i\} \in E(G)$, for all $1 \leq i \leq h$.
\end{setup}

The following result is being used repeatedly in this paper:
\begin{lemma}\cite[Proposition 2.3]{crupi}\label{very_well_char}
Let $G$ be a graph as in Set-up \ref{setup_well}. Then $G$ is a very
well-covered if and only if the following conditions hold:
\begin{enumerate}
 \item if $\{z_i, x_j\}, \{y_j,x_k\} \in E(G)$, then $\{z_i,x_k\} \in E(G)$ for distinct $i, j, k$ and for $z_i \in \{x_i,y_i \}$;
 \item if $\{x_i, y_j\} \in E(G)$, then $\{x_i, x_j\} \notin E(G)$.
\end{enumerate}
\end{lemma}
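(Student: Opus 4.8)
The plan is to translate the algebraic condition \emph{very well-covered} into a purely combinatorial statement about maximal independent sets, and then test that statement against conditions~(1) and~(2). First I would record that the edges $\{x_1,y_1\},\dots,\{x_h,y_h\}$ form a perfect matching of $G$. Any independent set contains at most one endpoint of each matching edge, so $\alpha(G)\le h$; since the given maximal independent set $Y$ already has $h$ elements, $\alpha(G)=h$ and every minimum vertex cover has size $2h-h=h$. In particular $\h(I(G))=h=\tfrac{|V(G)|}{2}$ holds automatically under Set-up~\ref{setup_well}, so the only real content of \emph{very well-covered} is unmixedness. Passing to complements, unmixedness is equivalent to all maximal independent sets having the same cardinality, and because $\alpha(G)=h$ this is the same as saying every maximal independent set meets each matching edge $\{x_i,y_i\}$.

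For the necessity of the two conditions I would argue by contrapositive, in each case producing a maximal independent set of size $<h$. If condition~(2) fails, say $\{x_i,y_j\},\{x_i,x_j\}\in E(G)$ with $i\neq j$, then I extend $\{x_i\}$ to a maximal independent set $S$; since $x_i$ is adjacent to both $x_j$ and $y_j$, the set $S$ contains neither, so it misses the matching edge $\{x_j,y_j\}$ and $|S|\le h-1$. If condition~(1) fails, say $\{z_i,x_j\},\{y_j,x_k\}\in E(G)$ but $\{z_i,x_k\}\notin E(G)$ for distinct $i,j,k$, then $\{z_i,x_k\}$ is an independent pair; extend it to a maximal independent set $S$. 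Now $x_j$ is excluded because it is adjacent to $z_i$, and $y_j$ is excluded because it is adjacent to $x_k$, so again $S$ avoids the whole matching edge $\{x_j,y_j\}$ and $|S|<h$, contradicting very well-coveredness.

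The substantive direction is sufficiency. Assuming~(1) and~(2), I would take an arbitrary maximal independent set $S$ and suppose, for contradiction, that $S\cap\{x_i,y_i\}=\emptyset$ for some $i$. Maximality forces $x_i$ and $y_i$ each to have a neighbour in $S$; since $Y$ is independent, the neighbour of $y_i$ is some $x_k\in S$, while the neighbour of $x_i$ is either some $x_m\in S$ or some $y_m\in S$. In each case I would feed the edge into $x_i$ together with the edge $\{y_i,x_k\}$ into the transitivity condition~(1) to manufacture an edge between two vertices already lying in $S$ (namely $\{x_m,x_k\}$ or $\{y_m,x_k\}$), which contradicts independence. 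The main obstacle — and the only place where real care is needed — is the bookkeeping on the indices: condition~(1) requires its three indices to be distinct, so one must separately treat the degenerate coincidence in which the neighbour of $x_i$ and the neighbour of $y_i$ are one and the same vertex $x_k$ adjacent to both $x_i$ and $y_i$. There condition~(1) is unavailable, but condition~(2) applies directly: $\{x_k,y_i\}\in E(G)$ forces $\{x_k,x_i\}\notin E(G)$, contradicting the assumption that $x_k$ is a neighbour of $x_i$. Verifying the distinctness hypotheses before each invocation of~(1) and dispatching this coincidence via~(2) is the delicate core of the argument; everything else is routine.
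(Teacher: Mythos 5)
The paper offers no proof of this lemma at all: it is imported wholesale from \cite[Proposition 2.3]{crupi}, so there is no internal argument to compare against, and what you have produced is a self-contained proof of the cited result. Your proof is correct. The opening reduction is the right one: under Set-up \ref{setup_well} the matching $\{x_1,y_1\},\ldots,\{x_h,y_h\}$ forces $\alpha(G)=h$ (and $|Y|=h$ realizes it), so by Gallai's identity $\h(I(G))=2h-\alpha(G)=h$ holds automatically, and very well-coveredness collapses to unmixedness, i.e.\ to the assertion that every maximal independent set meets every matching edge $\{x_i,y_i\}$. Both contrapositive constructions for necessity are sound: in each case the maximal independent set you build avoids both $x_j$ and $y_j$, hence has size at most $h-1<h=|Y|$, producing maximal independent sets of two different sizes and contradicting unmixedness. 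The sufficiency argument is also right, and you correctly isolate the delicate point, namely the distinctness hypothesis in condition (1). One sub-case, however, is not spelled out: when the neighbour $z_m\in S$ of $x_i$ and the neighbour $x_k\in S$ of $y_i$ share the index $m=k$ but are \emph{different} vertices, i.e.\ $z_k=y_k$, condition (1) is again unavailable and condition (2) does not apply as you stated it; but no condition is needed there, since then $x_k$ and $y_k$ would both lie in $S$ while being adjacent, contradicting the independence of $S$ outright. With that one line added, your argument is a complete and elementary proof of a statement the paper merely quotes.
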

We make an observation which follows directly follows from the Lemma \ref{very_well_char}.
\begin{obs} 
 If $G$ is a very well-covered graph as in Set-up
 \ref{setup_well}, then for any $1 \leq i \leq h$, 
 $G \setminus N_G[x_i,y_i]$, $G \setminus N_{G}[x_i]$ and $G \setminus \{x_i,y_i\}$ are very
 well-covered.
\end{obs}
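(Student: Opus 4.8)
The plan is to verify, for each of the three induced subgraphs, the characterization of very well-covered graphs given in Lemma \ref{very_well_char}. The first point to note is that conditions (1) and (2) of that lemma are \emph{inherited} by any induced subgraph carrying a compatible labeling: both are implications of the form ``if certain edges are present then another edge is present (resp. absent)'', and since an induced subgraph $H$ of $G$ retains an edge of $G$ exactly when both of its endpoints survive, the presence in $H$ of the hypothesis edges forces both endpoints of the conclusion edge to lie in $V(H)$, whence the conclusion holds in $H$ as well. Thus the real content of the statement is to exhibit, for each deletion, a labeling of the surviving vertices of the form required by Set-up \ref{setup_well}: a partition into a minimal vertex cover and a maximal independent set paired up by edges, with no isolated vertices.

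The case $G \setminus \{x_i,y_i\}$ is immediate: the surviving vertices are exactly the pairs $\{x_j,y_j\}$ with $j\neq i$, so setting $X'=X\setminus\{x_i\}$ and $Y'=Y\setminus\{y_i\}$ retains every pairing edge $\{x_j,y_j\}$ and hence produces no isolated vertices. Here $Y'$ is independent, being a subset of $Y$, and remains maximal because each $x_j\in X'$ is still adjacent to $y_j\in Y'$; consequently its complement $X'$ is a minimal vertex cover of the subgraph. Together with the inherited conditions (1) and (2), this shows $G\setminus\{x_i,y_i\}$ is very well-covered.

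The substance lies in the two neighborhood deletions, where a pair $\{x_j,y_j\}$ may be \emph{broken}, i.e. exactly one of $x_j,y_j$ is deleted. For $G\setminus N_G[x_i]$ (and similarly $G\setminus N_G[x_i,y_i]$) I would partition the indices $j\neq i$ according to which of $x_j,y_j$ survive, and prove the key claim: \emph{whenever a pair is broken, its surviving half is isolated in the deleted graph.} This is where condition (1) does the work. For instance, if $\{x_i,x_j\}\in E(G)$, then $x_j$ is deleted and, by condition (2), $y_j$ survives; for any neighbor $x_k$ of $y_j$ with $k\neq j$ one has $k\neq i$ (again by condition (2), since $\{x_i,x_j\}\in E(G)$ forces $\{x_i,y_j\}\notin E(G)$), and applying condition (1) to $\{x_i,x_j\}$ and $\{y_j,x_k\}$ yields $\{x_i,x_k\}\in E(G)$, so $x_k$ is deleted too; as $Y$ is independent, $y_j$ then has no surviving neighbor and is isolated. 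The remaining broken-pair configurations (a surviving $x_j$ when $\{x_i,y_j\}\in E(G)$, and, for the larger neighborhood, the cases produced by neighbors of $y_i$) are handled by the same mechanism, each time invoking condition (1) with an appropriate choice of triple. After discarding the resulting isolated vertices, which is harmless since isolated vertices do not affect the edge ideal, the surviving graph consists precisely of the intact pairs $\{x_j,y_j\}$, and the labeling argument of the previous paragraph applies verbatim.

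I expect the main obstacle to be the bookkeeping in this last step: correctly enumerating the ways a pair can break under each of the two neighborhood deletions and checking that the distinctness hypotheses on $i,j,k$ needed to apply condition (1) always hold, which they do by combining condition (2) with the defining adjacencies. A secondary point worth flagging explicitly is the role of isolated vertices: the deletions genuinely can create them, for example deleting $N_G[x_1]$ from a $4$-cycle leaves a single isolated vertex, so the statement should be read up to isolated vertices, equivalently as the assertion that the induced subgraph on the non-isolated surviving vertices is very well-covered.
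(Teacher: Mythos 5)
Your proof is correct and takes the only route the paper itself indicates: the paper gives no proof of this Observation beyond the remark that it ``follows directly from Lemma \ref{very_well_char}'', and your argument --- inheritance of conditions (1)--(2) by induced subgraphs, plus the key claim that any broken pair $\{x_j,y_j\}$ leaves its surviving half isolated, so that the non-isolated part consists of intact pairs and satisfies Set-up \ref{setup_well} --- is precisely the verification the paper leaves to the reader. Your caveat about isolated vertices is also well taken and is a genuine (if minor) imprecision in the paper's statement: for instance, if $G=C_4$ with the labeling of Set-up \ref{setup_well}, then $G\setminus N_G[x_1]$ is a single isolated vertex, so the Observation must indeed be read as asserting very well-coveredness of the induced subgraph on the non-isolated vertices, which is how the paper uses it, since isolated vertices do not affect the edge ideal.
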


We begin by showing that if we start with a very well-covered graph,
then we may make certain relabelling of the vertices with the
hypotheses of Set-up \ref{setup_well} being preserved.

\begin{lemma}\label{relabel_covered} 
Let $G$ be a very
well-covered graph satisfying Set-up
\ref{setup_well}. For an $i \in \{1, \ldots, h\}$, let 
$N_G(x_i) \setminus X= \{y_{i_1},\ldots,y_{i_t}\}$,
for some $1 \leq i_1,\ldots,i_t \leq h$. Let $$\displaystyle{X' =
  \left\{x_j' \mid  j \in \{1, \ldots, h\}, x_j' =
  \left\{
	\begin{array}{ll}
	  y_j & \text{ if } j \in \{i_1, \ldots, i_t\} \\
	  x_j & \text{ otherwise } 
	\end{array} \right.\right\}}$$
and $Y' = V(G) \setminus X'$.
Let $G_1$ denote the graph with the above
relabelling. Then $G_1$ with $V(G_1) = X' \cup Y'$ satisfies Set-up
\ref{setup_well} and the properties $(1)$ and $(2)$ of Lemma \ref{very_well_char}.
\end{lemma}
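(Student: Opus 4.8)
The plan is to verify the three defining requirements of Set-up \ref{setup_well} for $G_1$ --- that $X'$ is a minimal vertex cover, that $Y'$ is a maximal independent set, and that $\{x_j',y_j'\} \in E(G)$ for each $j$ --- and then to extract conditions $(1)$ and $(2)$ almost for free. First I would unwind the definitions: writing $J = \{i_1,\ldots,i_t\}$, the set $X'$ is $\{y_j \mid j \in J\} \cup \{x_j \mid j \notin J\}$, so $Y'$ is $\{x_j \mid j \in J\} \cup \{y_j \mid j \notin J\}$; in other words the roles of $x_j$ and $y_j$ are interchanged exactly for $j \in J$. From this description the adjacency condition is immediate, since $\{x_j',y_j'\}$ equals $\{x_j,y_j\}$ as an unordered pair for every $j$. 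I would also record at the outset that $i \in J$: because $\{x_i,y_i\} \in E(G)$ with $y_i \in Y$, we have $y_i \in N_G(x_i) \setminus X$, so $i$ is among $i_1,\ldots,i_t$.

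The heart of the matter is to show that $X'$ is a vertex cover of $G$, equivalently that $Y'$ is independent. Since $Y$ is independent, every edge has an endpoint in $X$, so I need only treat edges $\{x_p,x_q\}$ and $\{x_p,y_q\}$. Reading off the description of $X'$, such an edge fails to be covered in exactly two situations: an edge $\{x_p,x_q\}$ with both $p,q \in J$, and an edge $\{x_p,y_q\}$ with $p \in J$ and $q \notin J$. I would rule out each using Lemma \ref{very_well_char}. In the first situation $p,q \in J$ give $\{x_i,y_p\},\{x_i,y_q\} \in E(G)$; applying condition $(1)$ to $\{x_q,x_p\}$ and $\{y_p,x_i\}$ yields $\{x_i,x_q\} \in E(G)$, which contradicts condition $(2)$ applied to $\{x_i,y_q\}$ (and if $p=i$ or $q=i$ one already gets this contradiction directly from $(2)$, so the indices may be taken distinct). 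In the second situation $p \in J$ gives $\{x_i,y_p\} \in E(G)$; applying condition $(1)$ to $\{y_q,x_p\}$ and $\{y_p,x_i\}$ produces $\{x_i,y_q\} \in E(G)$, forcing $q \in J$ against $q \notin J$ (here $q \neq i$ automatically since $i \in J$, and $p=i$ again gives the contradiction at once). This case analysis, and getting the index bookkeeping in the transitivity condition $(1)$ exactly right, is where I expect essentially all of the work to lie.

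Once $X'$ is known to be a vertex cover, the rest is cheap. Since $G$ is unmixed, every minimal vertex cover has exactly $h$ vertices; as $X'$ is a vertex cover with $|X'| = h$, it cannot properly contain a minimal vertex cover and is therefore itself minimal. Complementarily, $Y' = V(G) \setminus X'$ is then a maximal independent set, so all the conditions of Set-up \ref{setup_well} hold for $G_1$. Finally, for conditions $(1)$ and $(2)$ of Lemma \ref{very_well_char} I would simply observe that $G_1$ is the \emph{same} graph as $G$, only relabelled, hence still very well-covered; since $G_1$ now also satisfies Set-up \ref{setup_well}, the forward implication of Lemma \ref{very_well_char} delivers conditions $(1)$ and $(2)$ immediately.
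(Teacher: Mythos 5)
Your proof is correct and follows essentially the same route as the paper: both arguments reduce to showing $Y'$ is independent (equivalently, $X'$ is a vertex cover) and rule out the two possible types of edges inside $Y'$ — an edge $\{x_p,x_q\}$ with $p,q \in J$ and an edge $\{x_p,y_q\}$ with $p \in J$, $q \notin J$ — by applying conditions $(1)$ and $(2)$ of Lemma \ref{very_well_char}. Your write-up is in fact slightly more careful than the paper's, since you explicitly note $i \in J$, handle the degenerate index cases needed for condition $(1)$, and spell out how minimality of $X'$ and the conclusion about properties $(1)$--$(2)$ for $G_1$ follow, all of which the paper leaves implicit.
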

\begin{proof} 
Let $X'=\{a_1,\ldots,a_h\}$ and $Y'=\{b_1,\ldots,b_h\}$.
First we claim that $Y'$ is a maximal
independent set of $G_1$. Suppose not, then there exists an edge $\{b_p,
b_q\} \in E(G_1).$ Since $G$ is a very well-covered, at least one of
$b_p$ and $b_q$ is in $\{x_{j_1}, \ldots, x_{j_t}\}$. 
Suppose $b_p,b_q \in \{x_{j_1}, \ldots, x_{j_t}\}$. 
Let $b_p=x_{j_r}$ and $b_q=x_{j_{r'}}$.
We have $\{x_i, y_{j_r}\}, \{x_i, y_{j_{r'}}\}, \{x_{j_r}, x_{j_{r'}} \} \in E(G)$.
Since $G$ is a very well-covered graph, there is an edge $\{x_i,x_{j_{r'}}\}$ in $G$.
This contradicts Lemma \ref{very_well_char}(2).
Suppose $b_q =
x_{j_r} \in \{x_{j_1}, \ldots, x_{j_t}\}$ for some $1 \leq r \leq t$ and $b_p \notin \{x_{j_1}, \ldots, x_{j_t}\}$. 
Therefore, $\{x_i, y_{j_r}\} \in E(G)$. Since
$G$ is a very well-covered and $\{x_i, y_{j_r}\},
~\{x_{j_r},b_p\} \in E(G)$, we have $\{x_i, b_p\} \in E(G)$, i.e.,
$b_p \in \{x_{j_1}, \ldots, x_{j_t}\}$, which is a contradiction to
the assumption that $b_p \notin \{x_{j_1}, \ldots, x_{j_t}\}$. Therefore $Y'$ is a
maximal independent set in $G_1$. Hence, $X'$ is a minimal vertex
cover of $G_1$.
%
%
%
\end{proof}
In the Lemma \ref{relabel_covered}, we have shown that we may conveniently swap
some of the $x_i$'s and $y_i$'s preserving the hypotheses of Set-up
\ref{setup_well} and the properties $(1)$ and $(2)$ of Lemma
\ref{very_well_char}. However,
arbitrary swapping of $x_i$'s and $y_i$'s may not preserve the
hypotheses of Set-up \ref{setup_well} as can be seen from the
following example.
\begin{example}
Let $G$ be the very well-covered graph on $\{x_1,x_2,x_3, y_1,y_2,y_3\}$ as
given in the figure below.

 \begin{minipage}{\linewidth}
\begin{minipage}{0.5\linewidth}
  \noindent
\begin{figure}[H]
\begin{tikzpicture}[scale=1.2]
\draw [line width=1pt] (0.5,2.5)-- (1.5,2.5);
\draw [line width=1pt] (0.5,2)-- (1.5,2);
\draw [line width=1pt] (0.5,1.5)-- (1.5,1.5);
\draw [shift={(0.5,2.25)},line width=1pt]  plot[domain=1.5707963267948966:4.71238898038469,variable=\t]({1*0.25*cos(\t r)+0*0.25*sin(\t r)},{0*0.25*cos(\t r)+1*0.25*sin(\t r)});
\draw [line width=1pt] (0.5,2)-- (1.5,1.5);
\draw [line width=1pt] (2.5,2.5)-- (3.5,2.5);
\draw [line width=1pt] (2.5,2)-- (3.5,2);
\draw [line width=1pt] (2.5,1.5)-- (3.5,1.5);
\draw [line width=1pt] (4.5,2.5)-- (5.5,2.5);
\draw [line width=1pt] (4.5,2)-- (5.5,2);
\draw [line width=1pt] (4.5,1.5)-- (5.5,1.5);
\draw [line width=1pt] (2.5,2.5)-- (3.5,2);
\draw [shift={(3.5,1.75)},line width=1pt]  plot[domain=-1.5707963267948966:1.5707963267948966,variable=\t]({1*0.25*cos(\t r)+0*0.25*sin(\t r)},{0*0.25*cos(\t r)+1*0.25*sin(\t r)});
\draw [shift={(4.5,2.25)},line width=1pt]  plot[domain=1.5707963267948966:4.71238898038469,variable=\t]({1*0.25*cos(\t r)+0*0.25*sin(\t r)},{0*0.25*cos(\t r)+1*0.25*sin(\t r)});
\draw [shift={(4.5,1.75)},line width=1pt]  plot[domain=1.5707963267948966:4.71238898038469,variable=\t]({1*0.25*cos(\t r)+0*0.25*sin(\t r)},{0*0.25*cos(\t r)+1*0.25*sin(\t r)});
\begin{scriptsize}
\draw [fill=black] (0.5,2.5) circle (1.5pt);
\draw[color=black] (0.5582198376393295,2.6539962216082387) node {$x_1$};
\draw [fill=black] (1.5,2.5) circle (1.5pt);
\draw[color=black] (1.5597727783063593,2.6539962216082387) node {$y_1$};
\draw[color=black] (1.0260985836443652,2.4712310864500213) node {};
\draw [fill=black] (0.5,2) circle (1.5pt);
\draw[color=black] (0.5582198376393295,2.1568750539778874) node {$x_2$};
\draw [fill=black] (1.5,2) circle (1.5pt);
\draw[color=black] (1.5597727783063593,2.1568750539778874) node {$y_2$};
\draw[color=black] (1.0260985836443652,1.97410991881967) node {};
\draw [fill=black] (0.5,1.5) circle (1.5pt);
\draw[color=black] (0.5582198376393295,1.659753886347536) node {$x_3$};
\draw [fill=black] (1.5,1.5) circle (1.5pt);
\draw[color=black] (1.5597727783063593,1.659753886347536) node {$y_3$};
\draw[color=black] (1.0260985836443652,1.4696781457829897) node {};
\draw[color=black] (0.3169698592304829,2.383503821574077) node {};
\draw[color=black] (0.9749243458000646,1.732859940410823) node {};
\draw [fill=black] (2.5,2.5) circle (1.5pt);
\draw[color=black] (2.5613257189733893,2.6539962216082387) node {$x_1$};
\draw [fill=black] (3.5,2.5) circle (1.5pt);
\draw[color=black] (3.5555680542340906,2.6539962216082387) node {$y_1$};
\draw[color=black] (3.029204464978425,2.4712310864500213) node {};
\draw [fill=black] (2.5,2) circle (1.5pt);
\draw[color=black] (2.5613257189733893,2.1568750539778874) node {$x_2'$};
\draw [fill=black] (3.5,2) circle (1.5pt);
\draw[color=black] (3.5555680542340906,2.1568750539778874) node {$y_2'$};
\draw[color=black] (3.029204464978425,1.97410991881967) node {};
\draw [fill=black] (2.5,1.5) circle (1.5pt);
\draw[color=black] (2.5613257189733893,1.659753886347536) node {$x_3$};
\draw [fill=black] (3.5,1.5) circle (1.5pt);
\draw[color=black] (3.5555680542340906,1.659753886347536) node {$y_3$};
\draw[color=black] (3.029204464978425,1.4696781457829897) node {};
\draw [fill=black] (4.5,2.5) circle (1.5pt);
\draw[color=black] (4.557120994901121,2.6539962216082387) node {$x_1$};
\draw [fill=black] (5.5,2.5) circle (1.5pt);
\draw[color=black] (5.558673935568151,2.6539962216082387) node {$y_1$};
\draw[color=black] (5.024999740906157,2.4712310864500213) node {};
\draw [fill=black] (4.5,2) circle (1.5pt);
\draw[color=black] (4.557120994901121,2.1568750539778874) node {$x_2$};
\draw [fill=black] (5.5,2) circle (1.5pt);
\draw[color=black] (5.558673935568151,2.1568750539778874) node {$y_2$};
\draw[color=black] (5.024999740906157,1.97410991881967) node {};
\draw [fill=black] (4.5,1.5) circle (1.5pt);
\draw[color=black] (4.557120994901121,1.659753886347536) node {$x_3'$};
\draw [fill=black] (5.5,1.5) circle (1.5pt);
\draw[color=black] (5.558673935568151,1.659753886347536) node {$y_3'$};
\draw[color=black] (5.024999740906157,1.4696781457829897) node {};
\draw[color=black] (2.970719621727796,2.229981108041174) node {};
\draw[color=black] (3.818749848861923,1.8790720485373968) node {};
\draw[color=black] (4.315871016492274,2.383503821574077) node {};
\draw[color=black] (4.315871016492274,1.8790720485373968) node {};
\draw (0.8,1.3) node[anchor=north west] {$G$};
\draw (2.8,1.3) node[anchor=north west] {$G_1$};
\draw (4.8,1.3) node[anchor=north west] {$G_2$};
\end{scriptsize}
\end{tikzpicture}
\end{figure}
\end{minipage}
\begin{minipage}{0.45\linewidth}
Let $G_1$ be the graph obtained from $G$ by swapping the vertices
$x_2$ and $y_2$, i.e., $V(G_1)=X'\cup Y'$, where
$X'=\{x_1,x_2',x_3\}$ and $Y'=\{y_1,y_2',y_3\}$, where $x_2' = y_2$
and $y_2' = x_2$. 
\end{minipage}
\end{minipage}

We can see that $Y'$ is
not an independent set of $G_1$ and hence $G_1$ does not satisfy the
Set-up \ref{setup_well}.
The third graph, $G_2$, is obtained by swapping the vertices
$x_3$ and $y_3$. Note that $N_G(x_2) \setminus X =
\{y_3\}$. In this case, by taking $X' = \{x_1, x_2, x_3'\}$ and $Y' =
\{y_1, y_2, y_3'\}$, it can be seen that $G_2$ satisfies the Set-up
\ref{setup_well}. 
\end{example}

We now show that adding edges between even-connected vertices
preserves the very well-covered property of a graph, provided there
are no vertices which are even-connected to itself.
\begin{theorem}\label{g=g'}
Let $G$ be a very well-covered graph with $2h$ vertices. If for some
$e \in E(G)$, the ideal $(I(G)^2:e)$ is squarefree, 
then $G'$ is a very well-covered graph, where $G'$ is the graph associated to $(I(G)^2:e)$.
\end{theorem}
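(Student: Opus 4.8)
The plan is to convert the colon ideal into explicit combinatorial data describing the edges of $G'$, and then to verify that $G'$ satisfies the Crupi--Rinaldo--Terai characterization of Lemma \ref{very_well_char}. First I would unravel the even-connection structure for the single edge $e=\{p,q\}$. Taking $M=e$ in Definition \ref{even_connected}, conditions $(2)$ and $(3)$ force $k=1$, so $u$ and $v$ are even-connected with respect to $e$ precisely when there is a length-three walk $u\,p\,q\,v$ in $G$, i.e. $u\in N_G(p)$ and $v\in N_G(q)$. By Theorem \ref{even_connec_equivalent} this gives $E(G')=E(G)\cup\{\{u,v\}: u\in N_G(p),\, v\in N_G(q),\, u\neq v\}$; that is, $G'$ is obtained from $G$ by adding all edges of the complete bipartite graph between $A:=N_G(p)$ and $B:=N_G(q)$. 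A vertex $u$ is even-connected to itself exactly when $u\in A\cap B$, so the hypothesis that $(I(G)^2:e)$ is squarefree is equivalent to $A\cap B=\emptyset$. Since every edge of $G$ survives in the colon ideal as a minimal generator, $G\subseteq G'$, whence $V(G')=V(G)$ and $G'$ has no isolated vertices.

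Next I would fix a convenient labelling. As $X$ is a vertex cover, $e$ has an endpoint in $X$, say $x_i$; applying Lemma \ref{relabel_covered} with this $i$ relabels $G$ to an isomorphic very well-covered graph in which $x_i$ is swapped into the independent set (this uses $\{x_i,y_i\}\in E$). Renaming, I may therefore assume $e=\{p,q\}$ with $p=y_m\in Y$ and $q=x_n\in X$. Since $Y$ is independent, $A=N_G(y_m)\subseteq X$, so every added edge has an endpoint in $X$ and no edge is added inside $Y$. Consequently $Y$ stays independent in $G'$, and because $G\subseteq G'$ it remains a maximal independent set and $X$ a minimal vertex cover, while the pairing $\{x_k,y_k\}\in E(G)\subseteq E(G')$ persists. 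Thus $G'$ satisfies Set-up \ref{setup_well}, and by Lemma \ref{very_well_char} it remains only to verify conditions $(1)$ and $(2)$ for $G'$.

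For this verification I would record the standing facts $A\cap B=\emptyset$, $A\subseteq X$, $x_n\in A$ and $y_m\in B$, together with conditions $(1)$ and $(2)$ for $G$, and argue according to whether each edge involved is ``old'' (in $G$) or ``new'' (between $A$ and $B$). The recurring mechanism is clean: from an old hypothesis edge and condition $(1)$ of $G$ one deduces that the relevant vertex lies in $A$ or in $B$, after which the conclusion edge has one endpoint in $A$ and one in $B$ and is therefore itself an added edge (the disjointness $A\cap B=\emptyset$ guarantees the endpoints are distinct). The ``all new'' cases are immediate, since the conclusion is automatically a pair between $A$ and $B$. For condition $(2)$ the disjointness $A\cap B=\emptyset$ repeatedly supplies the contradiction; for instance, if both $x_j$ and $y_j$ lay in $B=N_G(x_n)$, condition $(2)$ for $G$ would already be violated.

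The main obstacle is the index bookkeeping inside condition $(1)$: condition $(1)$ of $G$ requires its three indices to be distinct, whereas the index $n$ of the $X$-endpoint of $e$ may collide with $i$, $j$ or $k$. I would treat each collision separately and show that in such a case the two hypothesis edges force a pair such as $x_j,y_j\in N_G(x_n)$, contradicting condition $(2)$ of $G$, so that the offending configurations do not actually occur. Once all collisions are eliminated, conditions $(1)$ and $(2)$ hold for $G'$, and Lemma \ref{very_well_char} yields that $G'$ is very well-covered.
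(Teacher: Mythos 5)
Your proposal is correct and is essentially the paper's own argument: the paper likewise reduces, via Lemma \ref{relabel_covered}, to the case $e=\{x_n,y_m\}$ (treating $e=\{x_n,y_n\}$ and $e=\{x_n,x_m\}$ as sub-cases of the same scheme) and then verifies conditions $(1)$ and $(2)$ of Lemma \ref{very_well_char} for $G'$ by the same old-edge/new-edge case analysis, your sets $A=N_G(p)$, $B=N_G(q)$ and the disjointness $A\cap B=\emptyset$ being precisely the paper's even-connections $u\,p_1\,p_2\,v$ with respect to $e$ together with its use of squarefreeness to forbid generators $x_i^2\in(I(G)^2:e)$. One small inaccuracy to repair in the write-up: not every index collision with $n$ (or $m$) is refutable as you suggest --- for instance $j=n$, or $i=n$ with $z_i=y_n$, are configurations that genuinely occur, and there the required edge follows directly from the matching edges (which give $x_m,x_n\in A$ and $y_m,y_n\in B$) rather than from a contradiction via condition $(2)$ of $G$.
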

\begin{proof} By Lemma \ref{very_well_pro}, there is a relabeling of vertices 
$V(G)=X \cup Y$
such that $G$ satisfies the Set-up \ref{setup_well}, 
where $X=\{x_1,\ldots,x_h\}$ and $Y=\{y_1,\ldots,y_h\}$.
Suppose $e=\{x_n,y_n\}$, for some $1 \leq n \leq h$. 
If $u$ and $v$ are even-connected with respect to $\{x_n, y_n\}$, then
by Lemma \ref{very_well_char}, $\{u, v\} \in E(G)$. Therefore by
Theorem \ref{even_connec_equivalent} $(I(G)^2:e)=I(G)$ so that
$G'$ is a very well-covered graph.

Suppose $e=\{x_n,y_m\}$, for some $1 \leq n,m \leq h$. Note that $G'$ also satisfies Set-up \ref{setup_well}.
We need to show that, $G'$ satisfies Lemma \ref{very_well_char}(1)-(2).
If $\{z_i,x_j\}, \{y_j,x_k\} \in E(G)$, then by Lemma \ref{very_well_char}, $\{z_i,x_k\} \in E(G)$.
Therefore $\{z_i,x_k\} \in E(G')$.
Suppose $\{z_i,x_j\} \in E(G)$ and $\{y_j,x_k\} \in E(G') \setminus E(G)$. Let 
$y_jp_1p_2x_k$ be an even-connection in $G$ with respect to $\{p_1,p_2\}=e$. 
Since $\{z_i, x_j\}$ and $\{y_j, p_1\}$ are in $E(G)$ and
$G$ is very well-covered, $\{z_i,p_1\} \in E(G)$. Hence $z_ip_1p_2x_k$
is an even-connection in $G$ with respect to $e$ so that $\{z_i,x_k\} \in E(G')$.
Similarly we can prove that, if $\{z_i,x_j\} \in E(G') \setminus E(G)$ and $\{y_j,x_k\} \in E(G)$,
then $\{z_i,x_k\} \in E(G')$.
Suppose $\{z_i,x_j\}, \{y_j,x_k\} \in E(G') \setminus E(G)$. Let $z_ip_1p_2x_j$ and
$y_jq_1q_2x_k$ be an even-connection in $G$ with respect to $e=\{p_1,p_2\}=\{q_1,q_2\}$.
If $p_1=q_1$, then there is an even-connection $z_i(p_1=q_1)q_2x_k$
in $G$ with respect to $e$. Suppose $p_1=q_2$. Then
$\{p_2,x_j\} \in E(G)$ and $\{y_j,p_2\} \in E(G)$. This
contradicts the fact that $G$ is a very well-covered graph.
Therefore $\{z_i,x_k\} \in E(G')$.

Now we show that if $\{x_i,y_j\} \in E(G')$, then $\{x_i,x_j\} \notin
E(G')$. Suppose $\{x_i,y_j\} \in E(G)$ and $\{x_i,x_j\} \in E(G')$. Note that $\{x_i,x_j\} \notin E(G)$. 
Let $x_ip_1p_2x_j$ be an even-connection in $G$ with respect to $e=\{p_1,p_2\}$.
Since $\{x_i, y_j\}, \{x_j, p_2\} \in E(G)$,
$\{x_i,p_2\} \in E(G)$.
Then there is an even-connection $x_ip_1p_2x_i$ in $G$ with respect to $e$.
Therefore $x_i^2 \in (I(G)^2:e)$, which is a contradiction. 
Therefore, $\{x_i, x_j\} \notin E(G')$.
Suppose $\{x_i, x_j \}, \{x_i, y_j\} \in E(G')\setminus E(G)$. Therefore, there
exist even-connections, $x_ip_1p_2x_j$ and $x_iq_1q_2y_j$. If
$p_1=q_1$, then there exist edges $\{p_2, x_j\}$ and
$\{p_2, y_j\}$ in $E(G)$ which contradicts the assumption that $G$
is very well-covered. If $p_1 = q_2$, then there
exists an even-connection $x_ip_2p_1x_i$. Therefore, $x_i^2 \in (I(G)^2: e)$ 
which contradicts the assumption that $(I(G)^2:e)$ is a squarefree
monomial ideal. Hence $G'$ is a very well-covered graph.

Suppose $e = \{x_n, x_m\}$, for some $1 \leq n,m \leq h$  and 
$N_G(x_m) \setminus X=\{y_{j_1},\ldots,y_{j_t}\}$. Let $X'$ and $Y'$
be as in Lemma \ref{relabel_covered}.
Let $G_1$ denote the graph with the above
relabelling. By Lemma \ref{relabel_covered},
$V(G_1) = X' \cup Y'$ satisfies Set-up \ref{setup_well} and Lemma
\ref{very_well_char}. Let $e'$ denote the edge $e$ after relabelling.
Then $e' = \{x_n, y_m\}$. Since $I(G_1)$ is obtained from $I(G)$ by
relabelling certain variables, it follows that $(I(G_1)^2 :
e')$ is also a squarefree monomial ideal.  Let $G_1'$ be the
graph associated to $(I(G_1)^2:e')$. By previous case, $G_1'$ is a
very well-covered graph. Since $G_1'$ is also obtained by relabelling
of certain vertices of $G'$, it follows that $G'$ is a very
well-covered graph. 
\end{proof}

The below example shows that if $(I(G)^2 : e)$ is not squarefree, then
the assertion of the Theorem \ref{g=g'} need not necessarily be true.
\begin{example}\label{squarefree_example}
Let \[I=(x_1y_1,x_2y_2,x_3y_3,x_4y_4,x_1x_2,x_1x_4,x_1y_3,x_2y_3,x_2x_4,x_3x_4)
\subset R=K[x_1,\ldots,x_4,y_1,\ldots,y_4]\]
and $G$ be the associated graph. By Lemma \ref{very_well_pro}, $G$ is a very
well-covered graph. It can be seen that $x_4^2, y_3^2 \in (I^2 :
x_1x_2)$ and that \[I(G') = (\widetilde{I^2 : x_1x_2}) =
I+(y_1y_2,y_1x_4,y_1y_3,y_2x_4,y_2y_3,y_3x_4,x_4z_2,y_3z_1)\subseteq
R[z_1,z_2].\]
Since $\mathfrak{p}=(x_1,y_2,y_3,x_4)$ and $\mathfrak{q}=(x_1,x_2,x_3,y_1,y_2,y_3,y_4,z_2)$
are minimal prime ideals of $I(G')$, $G'$ is not a very well-covered graph.
\end{example}

\begin{theorem}\label{g'=sqfree}
 Let $G$ be a graph and $e_1,\ldots,e_s$, $s \geq 1$ be some edges of $G$ which are not
necessarily distinct. Suppose $(I(G)^{s+1}:e_1 \cdots e_s)$ is squarefree ideal. For $1 \leq i \leq s$,
$$(I(G)^{s+1}:e_1 \cdots e_s)=((I(G)^2:e_i)^s: \prod_{i \neq j}e_j).$$
 \end{theorem}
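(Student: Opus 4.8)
The plan is to fix the index $i$, write $e_1\cdots e_s=e_i\cdot P$ with $P=\prod_{j\neq i}e_j$, and set $J=(I^2:e_i)$, so that the asserted identity becomes $(I^{s+1}:e_iP)=(J^s:P)$. Since $e_i\in I$ we have $I\subseteq J$. All ideals in sight are monomial, so I would prove the two inclusions separately by testing on monomials, using throughout Theorem~\ref{even_connec_equivalent}: it describes $J$ (the case $s=1$) and $L:=(I^{s+1}:e_1\cdots e_s)$ as being generated in degree $2$ by the edges of $G$ together with the pairs even-connected with respect to $e_i$, respectively with respect to $e_1\cdots e_s$. In particular the squarefree hypothesis translates, again via Theorem~\ref{even_connec_equivalent}, into the statement that no vertex of $G$ is even-connected to itself with respect to $e_1\cdots e_s$, i.e. $L$ has no square minimal generator.

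For the inclusion $(I^{s+1}:e_iP)\subseteq(J^s:P)$, which I expect to need no hypothesis, take a monomial $w$ with $we_iP\in I^{s+1}$ and choose edges $f_0,\dots,f_s$ of $G$ whose product divides $we_iP$. Writing $e_i=\{x,y\}$, I would distinguish whether the two variables contributed by the factor $e_i$ are matched by the $f_k$. If $e_i$ itself occurs among the $f_k$, or if one of $x,y$ is unused by every $f_k$, then after deleting a suitable factor a product of $s$ edges divides $wP$, so $wP\in I^s\subseteq J^s$. Otherwise two of the edges have the form $f_a=\{x,u\}$ and $f_b=\{y,v\}$; then $u$--$x$--$y$--$v$ is an even-connection with respect to $e_i$, so $uv\in J$, and replacing $f_af_b$ by $uv$ exhibits $uv\cdot\prod_{k\neq a,b}f_k$ as a product of $s$ generators of $J$ dividing $wP$. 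In every case $wP\in J^s$, i.e. $w\in(J^s:P)$.

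The reverse inclusion $(J^s:P)\subseteq(I^{s+1}:e_iP)$ is where the hypothesis is essential, and I would argue by contradiction. Suppose $wP\in J^s$ but $we_iP\notin I^{s+1}$, and fix a factorization $g_1\cdots g_s\mid wP$ with each $g_\ell\in J$. Each $g_\ell$ is either an edge of $G$ or an even-connected pair $g_\ell=u_\ell v_\ell$, in which case $g_\ell e_i$ factors as a product of two edges $\{x,u_\ell\}\{y,v_\ell\}$. If at most one $g_\ell$ is of the second type, the single available copy of $e_i$ suffices to rewrite $g_1\cdots g_s\cdot e_i$ as a product of $s+1$ edges dividing $we_iP$, contradicting $we_iP\notin I^{s+1}$. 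Hence at least two of the generators are even-connected pairs, and the obstruction to assembling an $(s+1)$-fold edge product out of the edges $\{x,u_\ell\},\{y,v_\ell\}$, the edges comprising $P$, and the one copy of $e_i$ must be genuine.

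The heart of the proof, and the step I expect to be the main obstacle, is to show that this obstruction cannot occur when $L$ is squarefree. Concretely, I would run a Hall/augmenting-walk analysis at the two vertices $x,y$ of $e_i$ and show that a genuine failure to match forces a closed even walk which alternates between the edges incident to $x,y$ coming from the even-connected generators and the edges making up $P$, and which passes through $e_i$; by Definition~\ref{even_connected} such a walk is precisely a self--even-connection of some vertex with respect to $e_1\cdots e_s$. By Theorem~\ref{even_connec_equivalent} this produces a square minimal generator of $L$, contradicting the hypothesis. This is the only place the squarefree assumption enters, and it is indispensable: one can build a graph in which the edges of $P$ link the even-connected endpoints into exactly such a closed even walk, so that $(J^s:P)$ strictly contains $(I^{s+1}:e_1\cdots e_s)$ and the identity fails.
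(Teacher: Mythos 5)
Your first inclusion is correct, and so is your reduction of the reverse inclusion to the case where at least two of the generators $g_\ell$ in a factorization $g_1\cdots g_s\mid wP$ are even-connected pairs. The genuine gap is that the step you yourself call the heart of the proof is never carried out: you assert that a failure to reassemble $s+1$ edges forces a closed walk certifying a square generator of $(I^{s+1}:e_1\cdots e_s)$, but the ``Hall/augmenting-walk analysis'' that is supposed to produce this walk is only named, not performed. That step is not a technical afterthought; it is the entire content of the theorem, since everything before it is routine monomial bookkeeping. As it stands you have proved only the easy inclusion $(I^{s+1}:e_1\cdots e_s)\subseteq((I^2:e_i)^s:\prod_{j\neq i}e_j)$. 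For comparison, the paper disposes of the whole statement by citing the bipartite case \cite[Lemma 3.7]{banerjee1} and observing that squarefreeness supplies the only property of bipartiteness used there, namely the absence of closed odd walks arising from even-connections; so the paper, too, locates all the difficulty exactly at the point where you stopped.

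Beyond the missing argument, your description of the object to be hunted for is off, which suggests the analysis would not have closed up as planned. By Definition \ref{even_connected}, a vertex even-connected to itself is a closed walk with an \emph{odd} number of edges $p_0p_1\cdots p_{2k+1}$, $p_0=p_{2k+1}$, whose edges in positions $p_{2\ell+1}p_{2\ell+2}$ lie among the $e_j$'s; it is not a ``closed even walk,'' and the obstruction that actually arises need not pass through $e_i$ at all. Each even-connected pair $uv\in(I^2:e_i)$ has one end adjacent to $x$ and the other adjacent to $y$, where $e_i=\{x,y\}$; the true dichotomy is an orientation statement: if all pairs occurring along the relevant walk are oriented consistently relative to $x$ and $y$, one can shortcut through $x,y$ spending the single copy of $e_i$ once (no contradiction needed), while if two consecutive pairs clash, the chain of $G$-edges and $e_j$-edges ($j\neq i$) between them runs from a vertex of $N_G(y)$ to a vertex of $N_G(y)$ (or likewise for $x$), and closing that chain up at $y$ gives a closed odd walk avoiding $e_i$ whose $e$-positions use only the $e_j$ with $j\neq i$; this exhibits $y^2$ as a generator of $(I^{s+1}:e_1\cdots e_s)$, which is the desired contradiction. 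The clean route to this dichotomy is to note first that squarefreeness of $(I^{s+1}:e_1\cdots e_s)$ forces $(I^2:e_i)$ to be squarefree (a vertex adjacent to both $x$ and $y$ is even-connected to itself through $e_i$), so $(I^2:e_i)$ is the edge ideal of a graph containing every $e_j$, and Theorem \ref{even_connec_equivalent} then applies to \emph{both} colon ideals, reducing the theorem to a comparison of degree-two generators, i.e., of walks. Your insistence on working with a fixed monomial factorization $wP=mg_1\cdots g_s$ makes the matching problem genuinely harder than this, because the edges of $P$ are smeared across $m$ and the $g_\ell$'s and are not available as separate resources to alternate against.
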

 \begin{proof}
This result has been proved for bipartite graphs in \cite[Lemma
3.7]{banerjee1}. In our case, the assumption that $(I(G)^{s+1} : e_1
\cdots e_2)$ is squarefree implies that there are no odd cycles in the
even-connections. Using this property, one can see that their proof
goes through in our case as well.
 \end{proof}

In \cite[Theorem 4.1]{jayanthan}, it was proved that if $G$ is an
unmixed bipartite graph, then so is $G'$, the graph associated to
$(I(G)^{s+1} : e_1\cdots e_s)$. As a consequence of the above results,
we generalize this to the case of very well-covered graphs $G$ with
$(I(G)^{s+1} : e_1 \cdots e_s)$ squarefree.
 \begin{corollary}\label{g'=verywell} (with hypothesis as in Theorem
   \ref{g'=sqfree}). 
 If $G$ is a very well-covered graph, then  
 so is the graph $G'$ associated
to $(I(G)^{s+1}:e_1 \cdots e_s)$, for every $s$-fold product $e_1 \cdots e_s$ and $s \geq 1$.
 \end{corollary}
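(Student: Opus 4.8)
The plan is to argue by induction on $s$, taking Theorem \ref{g=g'} as the base case and using the factorization in Theorem \ref{g'=sqfree} to strip off one edge at each stage. When $s = 1$, the ideal under consideration is $(I(G)^2 : e_1)$, which is squarefree by the standing hypothesis, so Theorem \ref{g=g'} applies directly and shows that the graph $G'$ associated to $(I(G)^2 : e_1)$ is very well-covered.

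For the inductive step, suppose the assertion holds for every product of fewer than $s$ edges, and let $e_1 \cdots e_s$ be given with $(I(G)^{s+1} : e_1 \cdots e_s)$ squarefree. I would first record that $(I(G)^2 : e_1)$ is itself squarefree: if $u^2 \in (I(G)^2 : e_1)$ for some vertex $u$, then $u^2 e_1 \in I(G)^2$, whence $u^2 (e_1 \cdots e_s) = (u^2 e_1)(e_2 \cdots e_s) \in I(G)^2 \cdot I(G)^{s-1} = I(G)^{s+1}$, so $u^2 \in (I(G)^{s+1} : e_1 \cdots e_s)$; since this ideal is generated in degree two by Theorem \ref{even_connec_equivalent}, $u^2$ would have to be a minimal generator, contradicting squarefreeness. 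Hence $(I(G)^2 : e_1)$ is squarefree, and Theorem \ref{g=g'} shows that the graph $G_1$ with $I(G_1) = (I(G)^2 : e_1)$ is very well-covered.

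Next I would invoke Theorem \ref{g'=sqfree} with $i = 1$ to obtain
\[
(I(G)^{s+1} : e_1 \cdots e_s) = \bigl((I(G)^2 : e_1)^s : e_2 \cdots e_s\bigr) = \bigl(I(G_1)^{(s-1)+1} : e_2 \cdots e_s\bigr).
\]
Because $e_1 \in I(G)$ forces $I(G) \subseteq (I(G)^2 : e_1) = I(G_1)$, the edges $e_2, \ldots, e_s$ of $G$ remain edges of $G_1$, and the right-hand ideal is precisely the squarefree ideal we began with. Thus we are in the situation of the corollary applied to the very well-covered graph $G_1$ and the $(s-1)$-fold product $e_2 \cdots e_s$, whose associated graph coincides with $G'$. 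The inductive hypothesis then yields that $G'$ is very well-covered.

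I expect the only genuinely delicate point to be the propagation of squarefreeness, namely the verification that $(I(G)^2 : e_1)$ inherits it from the full colon ideal and that the ideal produced by Theorem \ref{g'=sqfree} agrees with the original one; both rest on the degree-two generation supplied by Theorem \ref{even_connec_equivalent}. Once these are in place, all of the combinatorial work is absorbed into Theorem \ref{g=g'}, and the remaining argument is a routine induction.
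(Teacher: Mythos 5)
Your proof is correct and takes essentially the same route as the paper's: induction on $s$, with Theorem \ref{g=g'} as the base case and the factorization $(I(G)^{s+1}:e_1\cdots e_s)=((I(G)^2:e_1)^s:e_2\cdots e_s)$ from Theorem \ref{g'=sqfree} used to pass to an $(s-1)$-fold product over the very well-covered graph associated to $(I(G)^2:e_1)$. The only difference is that you spell out two details the paper leaves implicit, namely that $(I(G)^2:e_1)$ inherits squarefreeness from the full colon ideal and that $e_2,\ldots,e_s$ remain edges of the new graph; both verifications are correct and worth recording.
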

\begin{proof} 
We prove that $G'$ is very well-covered by induction on $s$. 
If $s=1$, then the assertion follows from Theorem \ref{g=g'}. 
Assume by induction that for any very well-covered graph $H$ and edges
$f_1, \ldots, f_{s-1}$ with $(I(H)^s : f_1 \cdots f_{s-1})$ a
squarefree monomial ideal, 
the graph associated to $(I(H)^s:f_1 \cdots f_{s-1})$ is very
well-covered.
By Theorem \ref{g'=sqfree} we have 
$$(I(G)^{s+1}:e_1 \cdots e_s)=((I(G)^2:e_i)^s: \prod_{i \neq j}e_j).$$
Note that
$(I(G)^2:e_i)$ is  squarefree monomial ideal.
Let $H$ be the graph associated to $(I(G)^2:e_i)$. 
By the case $s = 1$, $H$ is a very well-covered graph. Therefore, by
induction, the graph associated to
$((I(G)^2:e_i)^s: \prod_{i \neq j}e_j)$ is a very well-covered graph. 
\end{proof}

We finally obtain an upper bound for the regularity in the squarefree case:
 \begin{corollary}\label{reg_sqfree}(with hypothesis as in Theorem
   \ref{g'=sqfree}). Let $G$ be a very well-covered graph. Then 
$$\reg((I(G)^{s+1}:e_1 \cdots e_s)) \leq \nu(G)+1.$$
\end{corollary}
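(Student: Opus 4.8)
The plan is to turn the statement into a comparison of induced matching numbers. Set $J=(I(G)^{s+1}:e_1\cdots e_s)$. Since $e_1\cdots e_s\in I(G)^s$ we have $I(G)\subseteq J$, and by Theorem \ref{even_connec_equivalent} the ideal $J$ is generated in degree $2$; as $J$ is squarefree by hypothesis, it is the edge ideal $I(G')$ of the associated graph $G'$, with $V(G')=V(G)$ and $E(G)\subseteq E(G')$. By Corollary \ref{g'=verywell} the graph $G'$ is very well-covered, so by \cite{mohammad} we get $\reg(I(G'))=\nu(G')+1$. Thus the corollary is equivalent to the purely combinatorial inequality $\nu(G')\le\nu(G)$, and this is where the work lies.

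To establish $\nu(G')\le\nu(G)$ I would first reduce to a single colon factor. By Theorem \ref{g'=sqfree}, $J=((I(G)^2:e_i)^s:\prod_{j\ne i}e_j)$; writing $H$ for the graph of $(I(G)^2:e_i)$, which is very well-covered by Theorem \ref{g=g'}, the graph $G'$ is precisely the even-connection graph of $H$ with respect to the $(s-1)$-fold product $\prod_{j\ne i}e_j$. An induction on $s$ then reduces the claim to the single-edge case, namely $\nu(G')\le\nu(G)$ when $G'$ is the graph of $(I(G)^2:e)$ for one edge $e$. Invoking Lemma \ref{relabel_covered} and the case analysis in the proof of Theorem \ref{g=g'}, I may assume $e=\{x_n,y_m\}$: for $e=\{x_n,y_n\}$ one has $G'=G$, while $e=\{x_n,x_m\}$ is converted to this case by a relabelling that does not change $\nu$.

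With $e=\{x_n,y_m\}$ fixed, Theorem \ref{even_connec_equivalent} shows that every edge of $E(G')\setminus E(G)$ arises from an even-connection $u\,x_n\,y_m\,v$, so that $\{u,x_n\},\{v,y_m\}\in E(G)$; thus each new edge has one endpoint in $N_G(x_n)$ and the other in $N_G(y_m)\subseteq X$. The key observation is that no induced matching of $G'$ can contain two new edges: if $\{u_1,v_1\}$ and $\{u_2,v_2\}$ were distinct new edges of a matching, with $u_i\in N_G(x_n)$ and $v_i\in N_G(y_m)$, then $u_1$ and $v_2$ are again even-connected through $x_n,y_m$, so $\{u_1,v_2\}\in E(G')$ joins the two matching edges and destroys the induced property. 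Hence a maximum induced matching $M'$ of $G'$ uses at most one new edge. If it uses none, then $M'\subseteq E(G)$ and, since $G\subseteq G'$, it is already an induced matching of $G$; if it uses exactly one new edge $\{u,v\}$, I would replace it by a genuine edge of $G$ incident to $u$ or to $v$ (using $\{u,x_n\},\{v,y_m\}\in E(G)$), producing an induced matching of $G$ of size $|M'|=\nu(G')$ and hence $\nu(G)\ge\nu(G')$.

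I expect the single-edge replacement to be the main obstacle. The difficulty is that the replacing edge must be vertex-disjoint from, and non-adjacent in $G$ to, the remaining $G$-edges of $M'$; when every $G$-neighbour of both $u$ and $v$ happens to lie in the closed neighbourhood of the matched vertices, one must produce an alternative edge. I would resolve this by showing that such a configuration forces, via the two conditions of Lemma \ref{very_well_char}, an edge of $G'$ between $\{u,v\}$ and $V(M'\setminus\{uv\})$, contradicting that $M'$ is an induced matching of $G'$. This reduction of every potential obstruction to a violation of the very well-covered characterization is the routine but delicate part of the argument.
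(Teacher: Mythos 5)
Your proposal is correct, and its first half is exactly the paper's proof: pass to the graph $G'$ of the squarefree colon ideal, invoke Corollary~\ref{g'=verywell} to conclude that $G'$ is very well-covered, and apply \cite[Theorem 4.12]{mohammad} to get $\reg((I(G)^{s+1}:e_1\cdots e_s))=\nu(G')+1$. The divergence is in proving $\nu(G')\leq\nu(G)$: the paper disposes of this in one line by citing \cite[Proposition 4.4]{jayanthan}, a general fact about even-connection graphs valid for arbitrary graphs and arbitrary $s$-fold products, whereas you reprove it from scratch in the case at hand. Your route---reduce to $s=1$ by Theorem~\ref{g'=sqfree} together with the same induction the paper uses for Corollary~\ref{g'=verywell}, settle $e=\{x_n,y_n\}$ and $e=\{x_n,x_m\}$ via Theorem~\ref{g=g'} and Lemma~\ref{relabel_covered}, then analyze $e=\{x_n,y_m\}$---is sound, and the reduction to a single edge is what makes the combinatorics clean: with only one copy of $e$ available, every even-connection has length three, so every new edge has the form $\{u,v\}$ with $u\in N_G(x_n)$, $v\in N_G(y_m)$, which immediately yields your key observation that an induced matching of $G'$ carries at most one new edge. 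The paper's citation buys brevity and generality (no squarefreeness or very well-coveredness is needed for $\nu(G')\le\nu(G)$); your argument buys a self-contained proof whose only inputs are results already established in the paper.

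The one step you left unexecuted---replacing the unique new edge $\{u,v\}$ of a maximum induced matching $M'=\{f_1,\ldots,f_t,\{u,v\}\}$---does go through, and more simply than you predict: Lemma~\ref{very_well_char} is not needed, only the even-connection mechanism you already used. Take the $G$-edge $\{v,y_m\}$. First, $y_m\notin V(f_i)$ for all $i$, since otherwise the $G$-edge $\{v,y_m\}$ would join two edges of $M'$ inside $G'$. Second, no $G$-edge joins $v$ to any $V(f_i)$, since such an edge lies in $E(G')$ and $M'$ is induced there. Third, if $\{y_m,z\}\in E(G)$ for some $z\in V(f_i)$, then $u\,x_n\,y_m\,z$ is an even-connection, so $\{u,z\}\in E(G')$ by Theorem~\ref{even_connec_equivalent}, again contradicting that $M'$ is induced in $G'$. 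Hence $\{f_1,\ldots,f_t,\{v,y_m\}\}$ is an induced matching of $G$ of size $|M'|$, giving $\nu(G)\geq\nu(G')$ and completing your argument.
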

\begin{proof}
 Let $G'$ be the graph associated to $(I(G)^{s+1}:e_1\cdots e_s)$.
  By Corollary \ref{g'=verywell}, $G'$
is a very well-covered graph. Therefore,
\[
\begin{array}{lcll}
     \reg((I(G)^{s+1}:e_1 \cdots e_s))&=& \nu(G')+1& \mbox{ (by \cite[Theorem 4.12]{mohammad})}  \\
	& \leq & \nu(G)+1. & \mbox{ (by  \cite[Proposition 4.4]{jayanthan})}
\end{array}
\]
\end{proof}

\section{Regularity of powers of edge ideals of very well-covered
graphs} \label{main_result}

In this section, we obtain an upper bound for the regularity of
$(I(G)^{s+1} : e_1 \cdots e_s)$ when this is not a squarefree monomial
ideal. Using these results we prove the main theorem, namely, the
asymptotic expression for the regularity of powers of edge ideals of
very well-covered graphs.
\begin{setup}\label{evenconnection_setup}
Let $G$ be a very well-covered graph with $2h$ vertices and
$V(G)=\{x_1,\ldots,x_h,$ $y_1,\ldots,y_h\}$ satisfying Lemma
\ref{very_well_pro}(1 - 2). By Theorem \ref{even_connec_equivalent} and
Definition \ref{pol_def}, $\widetilde{(I(G)^{s+1}:e_1 \cdots e_s)}$ is
a quadratic  squarefree monomial ideal in an appropriate polynomial ring.
Let $G'$ be the graph associated to $\widetilde{(I(G)^{s+1}:e_1 \cdots e_s)}$.
\end{setup}
If $(I(G)^{s+1} : e_1\cdots e_s)$ is not squarefree, then there will be
new vertices along with new edges in $G'$ and hence it 
need not necessarily be a very
well-covered graph, see Example \ref{squarefree_example}.  Our aim in this section is to get an upper bound
for $\reg(I(G'))$.  For this purpose, we need to get more details
about the structure of the graph $G'$. With this aim in mind, in the
next three Lemmas,
we describe some of the edges that are in $G'$ which are possibly not
in $G$.

\begin{lemma}\label{useful} Let the notation be as in Set-up \ref{evenconnection_setup}. 
For $t_i \in \{x_i,y_i \}$, if $t_ix_j$ and $y_jx_k \in I(G')$, then either $t_ix_k \in I(G')$ or $t_iy_j \in I(G')$ 
  for distinct $i, j, k$.
 \end{lemma}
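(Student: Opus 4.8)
The plan is to translate membership in $I(G')$ back into the combinatorics of $G$. Since $t_i,x_j,y_j,x_k$ are original (non-polarized) vertices and are pairwise distinct, polarization does not affect the relevant generators, so by Theorem \ref{even_connec_equivalent} each hypothesis $t_ix_j\in I(G')$ and $y_jx_k\in I(G')$ means that the pair is either an edge of $G$ or even-connected with respect to $e_1\cdots e_s$. The engine of the argument will be two \emph{endpoint-substitution} moves on even-connections: given an even-connection $u=p_0,p_1,\dots,p_{2\ell+1}=w$ and a vertex $u'\neq w$ with $\{u',p_1\}\in E(G)$, the walk $u',p_1,\dots,p_{2\ell+1}$ is again an even-connection (the first free edge is merely replaced), and symmetrically at the terminal end. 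Crucially, such a substitution keeps the same multiset of used edges, so it never violates Definition \ref{even_connected}(3). Combined with Lemma \ref{very_well_char}, which transports $G$-edges across the configuration, these moves let me reroute a known even-connection to the desired target.

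First I would dispose of the cases in which at least one of the two generators is an honest edge of $G$. If both lie in $E(G)$, then Lemma \ref{very_well_char}(1) gives $\{t_i,x_k\}\in E(G)\subseteq I(G')$ at once. If $\{t_i,x_j\}\in E(G)$ but $y_jx_k$ is an even-connection $y_j,x_m,\dots,x_k$ (its first neighbour $x_m$ lies in $X$ since $Y$ is independent, so $\{y_j,x_m\}\in E(G)$), Lemma \ref{very_well_char}(1) applied to $\{t_i,x_j\}$ and $\{y_j,x_m\}$ yields $\{t_i,x_m\}\in E(G)$, and an initial endpoint substitution on the walk $y_j,x_m,\dots,x_k$ produces $t_ix_k\in I(G')$. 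The symmetric situation (with $t_ix_j$ an even-connection $t_i=p_0,\dots,p_{2a+1}=x_j$ and $\{y_j,x_k\}\in E(G)$) is handled by a terminal substitution, after deriving $\{p_{2a},x_k\}\in E(G)$ from the last free edge $\{p_{2a},x_j\}$ and from $\{y_j,x_k\}$ via Lemma \ref{very_well_char}(1). Throughout I must track index collisions: when the intermediate index equals $i$ or $j$, or when the penultimate vertex $p_{2a}$ is $y_j$, $y_k$ or $x_k$, the generic application of Lemma \ref{very_well_char}(1) breaks down. These degenerate subcases are settled using the always-present edges $\{x_\ell,y_\ell\}\in E(G)$ (which supply the free edge needed for a substitution) and Lemma \ref{very_well_char}(2); it is exactly here that the alternative conclusion $t_iy_j\in I(G')$ arises, for instance when $t_i=x_i$ and the forced edge $\{y_j,x_i\}$ already gives $t_iy_j=x_iy_j\in E(G)$.

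The main obstacle is the case in which \emph{both} $t_ix_j$ and $y_jx_k$ are even-connections, say $t_i=p_0,\dots,p_{2a+1}=x_j$ and $y_j=q_0,\dots,q_{2b+1}=x_k$. Here no single substitution reaches $x_k$, and the naive fix---splicing the $t_i$-prefix of the first walk to the $x_k$-suffix of the second through the bridge $x_j,y_j$---forces the combined walk to use the union of the two multisets of used edges, which need not fit inside $e_1\cdots e_s$. This budget/multiplicity issue is genuine already for $s=1$, where each walk must use the single available edge $e$. The way through is to compare the two walks at the junction. In the model case $s=1$ the used edges satisfy $\{p_1,p_2\}=e=\{q_1,q_2\}$; the ``crossing'' alignment $p_1=q_2,\ p_2=q_1$ is impossible, since it would make $p_2$ adjacent to both $x_j$ and $y_j$, contradicting Lemma \ref{very_well_char}(2), while the ``aligned'' case $p_1=q_1,\ p_2=q_2$ yields the single-budget even-connection $t_i,p_1,p_2,x_k$, hence $t_ix_k\in I(G')$. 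I expect general $s$ to follow the same template: Lemma \ref{very_well_char}, especially part (2), is used to force a compatible alignment of the two walks along their shared used edges near $x_j$ and $y_j$, so that the relevant portions splice within the single budget $e_1\cdots e_s$, the residual incompatible or index-degenerate configurations delivering the second alternative $t_iy_j\in I(G')$. Verifying that this alignment can always be arranged, and bookkeeping the multiplicities for long walks, is the delicate heart of the proof.
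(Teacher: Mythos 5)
Your treatment of the easy cases (both pairs honest edges of $G$, or one edge and one even-connection handled by endpoint substitution) is sound and matches the paper; you are even more careful than the paper about the degenerate index collisions, where the alternative $t_iy_j\in I(G')$ can appear. However, in the crucial case where \emph{both} $t_ix_j$ and $y_jx_k$ arise from even-connections, your argument is only complete for $s=1$. For general $s$ you explicitly defer the problem (``I expect general $s$ to follow the same template \dots\ is the delicate heart of the proof''), and the template does not in fact scale: the obstruction is not only an alignment of shared used edges \emph{near the junction} $x_j,y_j$, but the possibility that a used edge deep inside the walk from $t_i$ to $x_j$ shares a vertex with a used edge deep inside the walk from $y_j$ to $x_k$. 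Lemma \ref{very_well_char}(2) applied at the junction says nothing about such interior collisions, so your proposal has a genuine gap exactly where the lemma is hardest.

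The paper closes this case by a different dichotomy. Write the two even-connections as $(t_i=q_0)q_1\cdots q_{2r_1}(q_{2r_1+1}=x_j)$ and $(y_j=s_0)s_1\cdots s_{2r_2}(s_{2r_2+1}=x_k)$. If no used edge $\{q_{2\beta+1},q_{2\beta+2}\}$ shares a vertex with any used edge $\{s_{2\alpha+1},s_{2\alpha+2}\}$, then in particular no edge of the product $e_1\cdots e_s$ is consumed by both walks, so the multiplicity condition (3) of Definition \ref{even_connected} holds automatically for the spliced walk $t_iq_1\cdots q_{2r_1}s_1\cdots s_{2r_2}x_k$, where the bridge $\{q_{2r_1},s_1\}\in E(G)$ comes from Lemma \ref{very_well_char}(1) applied to $\{q_{2r_1},x_j\}$ and $\{y_j,s_1\}$; your budget worry simply disappears in this subcase. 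If, on the other hand, some used edge of one walk meets some used edge of the other, the paper invokes \cite[Lemma 6.13]{banerjee}, which asserts precisely that when two even-connections with respect to the same product $e_1\cdots e_s$ collide in this way, an endpoint of one is even-connected (within the same budget) to an endpoint of the other; here that yields $t_i$ even-connected to $y_j$ or to $x_k$, i.e., $t_iy_j\in I(G')$ or $t_ix_k\in I(G')$. This external lemma is the missing ingredient in your proposal: without it (or a reproof of it), your interior-collision configurations remain unhandled.
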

\begin{proof}
Suppose $\{t_i,x_j\} \in E(G)$. 
If $\{y_j,x_k\} \in E(G)$, then by Lemma \ref{very_well_char}, $\{t_i,x_k\} \in E(G)$. Suppose $y_j$ and $x_k$ are 
even-connected with respect to $e_1 \cdots e_s$ in $G$. For some $r' \geq 1$, let
$(y_j=p_0)p_1 \cdots p_{2r'}(p_{2r'+1}=x_k)$
be an even-connection in $G$. 
Since $\{t_i,x_j\}, \{y_j,p_1\} \in E(G)$, by Lemma \ref{very_well_char}, $\{t_i,p_1\} \in E(G)$.
Then there is an even-connection
$t_ip_1 \cdots (p_{2r'+1}=x_k)$ with respect to $e_1 \cdots e_s$ in $G$. 
Therefore, $\{t_i,x_k\}\in E(G')$.
Similarly we can prove that, if $\{t_i,x_j\} \in E(G') \setminus E(G)$ and
$\{y_j,x_k\} \in E(G)$, then $\{t_i,x_k\} \in E(G')$.

Suppose $\{t_i,x_j\},\{y_j,x_k\} \in E(G')\setminus E(G)$.
For some $r_1 \geq 1$ and $r_2 \geq 1$, 
let $$(t_i=q_0)q_1 \cdots q_{2r_1}(q_{2r_1+1}=x_j)\text{ and }
(y_j=s_0)s_1 \cdots s_{2r_2}(s_{2r_2+1}=x_k)$$
be even-connections with respect to $e_1 \cdots e_s$ in $G$.
Suppose $\{s_{2\alpha+1},s_{2\alpha+2}\}$ and $\{q_{2\beta+1},q_{2\beta+2}\}$ 
do not have  common vertices, 
for all $0 \leq \alpha \leq r_2-1$ and $0 \leq \beta \leq r_1-1$. 
Since $\{q_{2r_1},x_j\},\{y_j,s_1\} \in E(G)$, $\{q_{2r_1},s_1\}\in E(G)$.
Then there is an even-connection $(t_i=q_0)q_1 \cdots q_{2r_1}s_1 \cdots s_{2r_2}(s_{2r_2+1}=x_k)$
with respect to $e_1 \cdots e_s$ in $G$.
If for some $0 \leq \alpha \leq r_2-1$, $0 \leq \beta \leq r_1-1$, 
$\{s_{2\alpha+1},s_{2\alpha+2}\}$ and $\{q_{2\beta+1},q_{2\beta+2}\}$ have a common vertex, then
by \cite[Lemma 6.13]{banerjee}, $t_i$ is even-connected to either $y_j$ or $x_k$ with respect to $e_1 \cdots e_s$ in $G$.
Therefore either $\{t_i,y_j\} \in E(G')$ or $\{t_i,x_k\} \in E(G')$.
\end{proof}
\begin{lemma}\label{even_obs} Let the notation be as in Set-up \ref{evenconnection_setup}.  
  Suppose $(u=p_0)p_1 \cdots p_{2k}(p_{2k+1}=v)$ is an even-connection in $G$ with respect to
  $e_1 \cdots e_s$, for some $k \geq 1$. If $\{w,p_i\} \in E(G')$, for some 
  $0 \leq i \leq 2k+1$, then either $\{u,w\} \in E(G')$ or $\{v,w\} \in E(G')$.
 \end{lemma}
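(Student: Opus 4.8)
The plan is to reroute the given even-connection so that $w$ becomes an endpoint. Write the even-connection as $P\colon(u=p_0)p_1\cdots p_{2k+1}(=v)$; by Definition \ref{even_connected} its product edges are exactly $\{p_{2\ell+1},p_{2\ell+2}\}$ for $0\le \ell\le k-1$, while the edges $\{p_{2\ell},p_{2\ell+1}\}$, $0\le\ell\le k$, are the connecting edges. The boundary cases $i=0$ and $i=2k+1$ are immediate, since there $\{w,p_i\}$ is $\{w,u\}$ or $\{w,v\}$. So I would assume $1\le i\le 2k$ and split on the parity of $i$: when $i$ is odd I retain the forward segment $p_ip_{i+1}\cdots p_{2k+1}$ and aim to produce an even-connection from $w$ to $v$, and when $i$ is even I retain the backward segment $p_ip_{i-1}\cdots p_0$ and aim for one from $w$ to $u$. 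The point of this parity choice is that the first retained edge at $p_i$, namely $\{p_i,p_{i+1}\}$ when $i$ is odd and $\{p_i,p_{i-1}\}$ when $i$ is even, is a product edge of $P$; hence the retained segment, read out of $p_i$, is already a correctly alternating segment whose product edges form a sub-multiset of those of $P$, so condition (3) of Definition \ref{even_connected} is inherited for free.

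Next I would invoke Theorem \ref{even_connec_equivalent} to see why $\{w,p_i\}\in E(G')$ and split accordingly. If $\{w,p_i\}\in E(G)$, then prepending this edge to the retained segment yields the walk $w\,p_i\,p_{i\pm1}\cdots$ ending at $v$ (resp.\ $u$); it has odd length, all its edges lie in $G$, and the prepended edge is a connecting edge, so it uses no product edge and the multiplicity bound remains governed by the retained segment. Thus it is a genuine even-connection in $G$ from $w$ to $v$ (resp.\ $u$), and Theorem \ref{even_connec_equivalent} gives the required edge in $G'$. Note that, unlike in Lemma \ref{useful}, no appeal to the very well-covered property is needed here, because the two pieces already meet at the common vertex $p_i$.

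The remaining case, $\{w,p_i\}\in E(G')\setminus E(G)$, is the crux: now $w$ and $p_i$ are themselves even-connected in $G$ by a walk $Q\colon(w=q_0)q_1\cdots q_{2m+1}(=p_i)$, and I would concatenate $Q$ with the retained segment of $P$ at their shared vertex $p_i$. The last edge $\{q_{2m},q_{2m+1}\}$ of $Q$ is a connecting edge and the first retained edge of $P$ is a product edge, so the alternation matches across the junction and the concatenated walk has the right parity to be an even-connection from $w$ to $v$ (resp.\ $u$). The only obstruction is the multiplicity condition (3), which can fail if a product edge of $Q$ coincides with a retained product edge of $P$. This is precisely the situation resolved in the proof of Lemma \ref{useful}: one distinguishes whether the product edges $\{q_{2\beta+1},q_{2\beta+2}\}$ of $Q$ and the retained product edges of $P$ share a common vertex. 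If they share none, then no $e_j$ is used by both pieces, the concatenation satisfies (3) directly, and we are done; if some pair shares a vertex, I would apply \cite[Lemma 6.13]{banerjee} to extract an honest even-connection from $w$ to $u$ or to $v$. This multiplicity bookkeeping is the main difficulty, but it runs exactly parallel to the argument already carried out in Lemma \ref{useful}.
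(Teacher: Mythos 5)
Your proof is correct and follows essentially the same route as the paper's: both handle the boundary cases $i=0,2k+1$ trivially, split on the parity of $i$ to choose the retained segment of the given even-connection, concatenate the connection from $w$ to $p_i$ with that segment at the shared vertex $p_i$, and invoke \cite[Lemma 6.13]{banerjee} precisely when a product edge of the $w$-to-$p_i$ walk shares a vertex with a retained product edge. The only difference is presentational: the paper absorbs your case $\{w,p_i\}\in E(G)$ as the degenerate length-one instance of the walk $(w=q_0)q_1\cdots(q_{2j+1}=p_i)$ rather than treating it separately.
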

\begin{proof}
 If $i=0,2k+1$, then we are done. 
Assume that $i=2j+1$, for some $j \geq 0$.
For some $j \geq 0$, let
$(w=q_0)q_1 \cdots (q_{2j+1}=p_i)$ be an even-connection with respect to $e_1 \cdots e_s$ 
in $G$. If $\{q_{2\alpha+1},q_{2\alpha+2}\}$ and $\{p_{2\beta+1},p_{2 \beta+2}\}$ do not have
a common vertex, for 
all $0 \leq \alpha \leq j-1$, $j \leq \beta \leq k-1$, then 
$(w=q_0)q_1 \cdots (q_{2j+1}=p_i)p_{i+1} \cdots (p_{2k+1}=v)$
is an even-connection with respect to $e_1 \cdots e_s$ in $G$.
Therefore, $wv \in I(G')$.
If $\{q_{2\alpha+1},q_{2\alpha+2}\}$ and $\{p_{2\beta+1},p_{2 \beta+2}\}$ 
have a common vertex,
for some $0 \leq \alpha \leq j-1$, $j \leq \beta \leq k-1$, then
by \cite[Lemma 6.13]{banerjee},
$w$ is  even-connected either to $u$ or to $v$ in $G$. 
Therefore either $wu\in I(G')$ or $wv \in I(G')$.
If $i = 2j+2$, then proof is similar.
%
\end{proof}

In the next lemma, we further obtain more even-connected edges in $G'$.
Let $G$ be a very well-covered graph as in Set-up \ref{setup_well}.
For $u=x_i$ or $y_i$, set $[u] = \{x_i,y_i\}$ 
and $N_G[[u]]=N_G[x_i,y_i]$.
\begin{lemma}\label{even-con-itself} Let the notation be as in Set-up \ref{evenconnection_setup}.
Let $u^2 \in (I(G)^{s+1}:e_1 \cdots e_s)$.
If $a \in (N_{G'}([u]\setminus u)\cap V(G))$ 
and $b \in N_{G}[[u]]$, then $\{a,b\} \in E(G')$.
\end{lemma}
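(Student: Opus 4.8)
The plan is to fix a self-even-connection of $u$ coming from the hypothesis $u^2 \in (I(G)^{s+1}:e_1\cdots e_s)$ and to show that the partner's $G'$-neighbourhood attaches fully to $N_G[[u]]$. By Lemma \ref{relabel_covered} I may relabel so that $u = x_i$ and $[u]\setminus u = \{y_i\}$, keeping Set-up \ref{setup_well} and the properties of Lemma \ref{very_well_char}. By Theorem \ref{even_connec_equivalent}, $u^2 \in (I(G)^{s+1}:e_1\cdots e_s)$ means $x_i$ is even-connected to itself, so I fix an even-connection $(x_i = p_0)p_1\cdots p_{2k}(p_{2k+1}=x_i)$ in $G$ with $k\geq 1$. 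The crucial feature of this path is that its two endpoints coincide; hence, applying Lemma \ref{even_obs} to it, any vertex $w$ with $\{w,p_\ell\}\in E(G')$ for some $0\le \ell\le 2k+1$ automatically satisfies $\{w,x_i\}\in E(G')$. I will use this repeatedly to collapse adjacencies along the path onto $x_i$.

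First I show the seed fact $\{a, x_i\}\in E(G')$, i.e. that $a$ is $G'$-adjacent to $u$ and not merely to its partner $y_i$. If $\{a,y_i\}\in E(G)$ then, since $Y$ is independent, $a = x_j\in X$ with $j\ne i$; feeding the edges $\{p_1,x_i\}\in E(G)$ and $\{y_i,x_j\}\in E(G)$ into Lemma \ref{very_well_char}(1) yields $\{p_1,x_j\}=\{p_1,a\}\in E(G)$, so $a$ is $G$-adjacent to a vertex of the path and Lemma \ref{even_obs} gives $\{a,x_i\}\in E(G')$. If instead $a$ and $y_i$ are even-connected in $G$, I splice that even-connection with the self-even-connection of $x_i$ along the edge $\{x_i,y_i\}\in E(G)$, using Lemma \ref{very_well_char} to supply the required intermediate $G$-edges; this again places $a$ adjacent in $G'$ to a vertex of the $x_i$-path, and Lemma \ref{even_obs} concludes $\{a,x_i\}\in E(G')$.

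With both $\{a,x_i\}$ and $\{a,y_i\}$ in $E(G')$, I treat a general $b\in N_G[[u]] = \{x_i,y_i\}\cup N_G(x_i)\cup N_G(y_i)$. The cases $b\in\{x_i,y_i\}$ are the two seed facts, and the case $b = a$ gives $a^2\in(I(G)^{s+1}:e_1\cdots e_s)$ by reading off a self-even-connection of $a$ that runs through the $x_i$-path. For $b\in N_G(x_i)\cup N_G(y_i)$ with $b\ne a$, I construct an even-connection from $a$ to $b$: starting from the $G'$-edge joining $a$ to $x_i$ (or to $y_i$), I extend by the $G$-edge $\{x_i,b\}$ (resp. $\{y_i,b\}$), fixing the parity of the connecting edge with the $e$-edges of the fixed path and invoking Lemma \ref{very_well_char}(1)--(2) to produce the missing $G$-edges, exactly in the style of Lemmas \ref{useful} and \ref{even_obs}. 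Whenever a newly built path meets a vertex already adjacent in $G'$ to some $p_\ell$, Lemma \ref{even_obs} short-circuits the argument back to $x_i$.

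The main obstacle is the subcase in which $a$ is even-connected, rather than genuinely $G$-adjacent, to $y_i$, together with the bookkeeping when the even-connection built for $a\to b$ shares an $e$-edge vertex with the fixed $x_i$-path: such collisions can create an odd closed walk, i.e. a vertex even-connected to itself, and must be resolved by rerouting via \cite[Lemma 6.13]{banerjee}, precisely as in the proofs of Lemmas \ref{useful} and \ref{even_obs}. A secondary nuisance is the collection of degenerate coincidences among $x_i$, $y_i$ and the path vertices $p_\ell$ (for instance $p_1 = y_i$, which makes the distinctness hypothesis of Lemma \ref{very_well_char}(1) fail and must be checked separately); in each such configuration one observes that the desired $G'$-edge is already a genuine $G$-edge or an immediate consequence of Lemma \ref{even_obs}, so these degeneracies do not affect the conclusion.
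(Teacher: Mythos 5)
Your first half is essentially right, and one piece of it is genuinely cleaner than the paper's own argument: because the fixed path $(x_i=p_0)p_1\cdots p_{2k}(p_{2k+1}=x_i)$ has coinciding endpoints, the disjunction in Lemma \ref{even_obs} collapses, so any vertex that is $G'$-adjacent to some $p_\ell$ is $G'$-adjacent to $x_i$. Your seed fact $\{a,x_i\}\in E(G')$ then follows: when $a$ is even-connected to $y_i$ via $(a=q_0)q_1\cdots q_{2t}(q_{2t+1}=y_i)$, replacing the last vertex $y_i$ by $p_1$ (the connecting edge $\{q_{2t},p_1\}$ being supplied by Lemma \ref{very_well_char}(1)) gives an even-connection from $a$ to $p_1$ that uses only the $e$-edges of the $q$-path, so condition (3) of Definition \ref{even_connected} is not threatened, and Lemma \ref{even_obs} finishes. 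The relabelling is also legitimate, though you should say why: if $u=y_i$ then $y_i\in N_G(x_i)\setminus X$, so Lemma \ref{relabel_covered} applied to the index $i$ does swap the pair $\{x_i,y_i\}$ (arbitrary single swaps are not allowed, as the paper's example shows).

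The gap is in your main case, precisely when $a$ is even-connected (not $G$-adjacent) to $y_i$ and $b\in N_G(y_i)$, $b\neq a$. Endpoint replacement fails here: $q_{2t}$ and $b$ are two $X$-neighbours of $y_i$ and need not be adjacent (they never are when $G$ is unmixed bipartite), so any even-connection from $a$ to $b$ must be routed through the $p$-path, and then the $q$-path and the $p$-path may share $e$-edges, threatening condition (3). At this point every tool you invoke is disjunctive and therefore powerless: Lemma \ref{even_obs}, or a \cite[Lemma 6.13]{banerjee}-style rerouting ``as in the proofs of Lemmas \ref{useful} and \ref{even_obs}'', applied to these two paths only yields ``$a$ is $G'$-adjacent to $x_i$ or to $b$'' (or ``to $y_i$ or to $b$''), and the first disjunct is already known from your seed facts, so nothing forces $\{a,b\}\in E(G')$; likewise ``short-circuiting back to $x_i$'' proves a statement you already have, not the one you need. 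This is exactly why the paper's proof does something your proposal never does: it first extends the self-even-connection to $b$ \emph{in both directions}, producing even-connections $x_ip_1\cdots p_{2k}b$ \emph{and} $x_ip_{2k}\cdots p_1b$ (the edges $\{p_{2k},b\}$ and $\{p_1,b\}$ come from Lemma \ref{very_well_char}(1)), and then, when the $q$-path shares an $e$-edge with the $p$-path, it splices at the \emph{first} shared edge and continues toward $b$ forwards or backwards along the $p$-path according to the orientation in which that edge is traversed. This orientation-sensitive, two-directional splice is the crux of the paper's proof of Lemma \ref{even-con-itself}; without it, or an equivalent non-disjunctive construction, your argument does not close.
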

\begin{proof} Since $u^2 \in (I(G)^{s+1}:e_1 \cdots s_s)$, we have an even-connection 
$(p_0=u)p_1 \cdots p_{2k}(p_{2k+1}=u)$ with respect 
to $e_1 \cdots e_s$ in $G$, for some $k \geq 1$. 
Note that, since $\{p_1,u\}, \{p_{2k},u\} \in E(G)$, if $b \in
N_G([u]\setminus u)$, then $p_1 \neq b$ and $p_{2k} \neq b$.
Therefore, $(u=p_0)p_1\cdots p_{2k}b$ and $(u=p_{2k+1})p_{2k}\cdots
p_1b$ are an even-connections in $G$ so that $\{u,b\} \in E(G')$.
Hence, if $a = u$, then $\{a,b\} \in E(G')$.

We now assume that $a \neq u$. Suppose $\{a,[u]\setminus u\} \in E(G)$. 
If either $b=u$ or $b=[u]\setminus u$, then we are done.
If $b \in N_{G}(u)$, then $\{a,b\} \in E(G)$. Suppose $b \in N_{G}([u]\setminus u)$. Since we have 
$\{u,b\} \in E(G')$, by the proof of  Lemma \ref{useful}, $\{a,b\} \in E(G')$. 

Suppose $\{a,[u]\setminus u\} \in E(G')\setminus E(G)$.
For some $t \geq 1$, let $(q_0=a)q_1 \cdots (q_{2t+1}=[u]\setminus u)$
be an even-connection with respect to $e_1 \cdots e_s$ in $G$.
If $\{u,b\}\in E(G)$, then by the proof of Lemma \ref{useful}, $\{a,b\} \in E(G')$.
Suppose $\{[u]\setminus u,b\}\in E(G)$. Note that $u$ is
even-connected to $b$ with even-connections $(u=p_0)p_1 \cdots p_{2k}b$ and 
$(u=p_{2k+1})p_{2k} \cdots p_1b$ with respect to $e_1 \cdots e_s$ in $G$.
Suppose $\{p_{2\lambda+1},p_{2\lambda+2}\} \neq \{q_{2\lambda'+1},q_{2\lambda'+2}\}$, 
for all $0 \leq \lambda \leq k-1$, $0 \leq \lambda' \leq t-1$. Then
either $\{p_{2\lambda+1},p_{2\lambda+2}\} \cap \{q_{2\lambda'+1},q_{2\lambda'+2}\} = 
\emptyset$ or $\{p_{2\lambda+1},p_{2\lambda+2}\} \cap \{q_{2\lambda'+1},
q_{2\lambda'+2}\}$ is a vertex. In either case, it
follows from the proof of Lemma \ref{useful} that $\{a,b\} \in E(G')$. 
Suppose $\{p_{2\lambda+1},p_{2\lambda+2}\}=
\{q_{2\lambda'+1},q_{2\lambda'+2}\}$, for some $0 \leq \lambda \leq k-1$, 
$0 \leq \lambda' \leq t-1$.
Choose the smallest $\lambda'$ such that $\{p_{2\lambda+1},p_{2\lambda+2}\} =
\{q_{2\lambda'+1},q_{2\lambda'+2}\}$ for some $0 \leq \lambda \leq k-1$.
If  $p_{2\lambda+1}=q_{2\lambda'+1}$ and $p_{2\lambda+2}=q_{2\lambda'+2}$, then
there is an even-connection 
$$(a=q_0)q_1 \cdots (q_{2\lambda'+1}=p_{2\lambda+1})(q_{2\lambda'+2}=p_{2\lambda+2})p_{2\lambda+3}
\cdots p_{2k}b$$
with respect to $e_1 \cdots e_s$ in $G$. If  $p_{2\lambda+1}=q_{2\lambda'+2}$ and 
$p_{2\lambda+2}=q_{2\lambda'+1}$, then there is an even-connection 
$$(a=q_0)q_1 \cdots (q_{2\lambda'+1}=p_{2\lambda+2})(q_{2\lambda'+2}=p_{2\lambda+1})p_{2\lambda}
\cdots p_1b$$
with respect to $e_1 \cdots e_s$ in $G$. Therefore $\{a,b\} \in E(G')$.

If $b=u$ and $a \in (N_{G'}([u]\setminus u)\cap V(G))$, 
then proceeding as in the previous case of the proof, one can 
show that $\{a,b\} \in E(G')$.
\end{proof}

To get an upper bound for the regularity of $I(G')$, we need to bound
the regularity of certain induced subgraphs of $G'$. In the next two
lemmas, we understand more closely the structure of some of the
induced subgraphs of $G'$. This, in turn, helps us during the
induction process.

\begin{lemma}\label{tech_lemma} Let the notation be as in  Set-up \ref{evenconnection_setup}.
Let $y \in V(G)$ and $H=G \setminus N_G[y]$. 
If $\{e_1,\ldots,e_s\} \cap E(H) = \{e_{i_1},\ldots,e_{i_t}\}$ and $H'$ is the 
graph associated to $\widetilde{(I(H)^{t+1}:e_{i_1} \cdots e_{i_t})}$, then 
$G' \setminus N_{G'}[y]$ is an induced subgraph of $H'$.
In particular, $\reg(I(G' \setminus N_{G'}[y])) \leq \reg(I(H')).$
\end{lemma}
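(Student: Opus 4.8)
The plan is to compare $G'$ and $H'$ vertex-by-vertex and edge-by-edge, using the combinatorial description of the minimal generators (Theorem~\ref{even_connec_equivalent}) together with the ``localization'' principle of Lemma~\ref{even_obs}. Write $W=G'\setminus N_{G'}[y]$. Recall from Set-up~\ref{evenconnection_setup} that the vertices of $G'$ are the original vertices $V(G)$ together with one new polarization vertex $z_u$ for each $u$ with $u^2\in (I(G)^{s+1}:e_1\cdots e_s)$, and that the unique edge of $G'$ incident to such a $z_u$ is $\{u,z_u\}$; the same description holds for $H'$, with $V(H)$ in place of $V(G)$ and $e_{i_1}\cdots e_{i_t}$ in place of $e_1\cdots e_s$. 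Since $E(G)\subseteq E(G')$ we have $N_G[y]\subseteq N_{G'}[y]$, so every original vertex surviving in $W$ already lies in $V(G)\setminus N_G[y]=V(H)$.

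The heart of the matter is the following claim: if $u,v\in V(G)\setminus N_{G'}[y]$ (possibly $u=v$) are even-connected in $G$ with respect to $e_1\cdots e_s$ via a walk $(u=p_0)p_1\cdots (p_{2k+1}=v)$, then no vertex $p_i$ lies in $N_G[y]$. Granting this, all $p_i$ lie in $V(H)$; the consecutive pairs $p_rp_{r+1}$ are then edges of the induced subgraph $H$, and the product edges $p_{2\ell+1}p_{2\ell+2}$, being some $e_i\in E(G)$ with both endpoints in $V(H)$, lie in $E(H)$ and hence among $\{e_{i_1},\ldots,e_{i_t}\}$. As the edges of $\{e_1,\ldots,e_s\}$ belonging to $E(H)$ are exactly $e_{i_1},\ldots,e_{i_t}$ with the same multiplicities, the multiplicity condition of Definition~\ref{even_connected} is preserved, so $u$ and $v$ are even-connected in $H$ with respect to $e_{i_1}\cdots e_{i_t}$. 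By Theorem~\ref{even_connec_equivalent} applied to $H$, this produces the corresponding edge of $H'$ (when $u=v$, the square $u^2\in(I(H)^{t+1}:e_{i_1}\cdots e_{i_t})$, hence the edge $\{u,z_u\}$). This shows $E(W)\subseteq E(H')$.

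To prove the claim I would argue by contradiction: suppose some $p_i\in N_G[y]$. If $p_i\in N_G(y)$ then $\{y,p_i\}\in E(G)\subseteq E(G')$. If $p_i=y$, then necessarily $1\le i\le 2k$, because the endpoints $u=p_0$ and $v=p_{2k+1}$ differ from $y$ (note $y\in N_{G'}[y]$ while $u,v\notin N_{G'}[y]$); then $\{y,p_{i+1}\}=\{p_i,p_{i+1}\}\in E(G)\subseteq E(G')$. In either case there is an index $j$ with $\{y,p_j\}\in E(G')$, so Lemma~\ref{even_obs}, applied with $w=y$, forces $\{u,y\}\in E(G')$ or $\{v,y\}\in E(G')$, i.e. $u\in N_{G'}(y)$ or $v\in N_{G'}(y)$, contradicting $u,v\notin N_{G'}[y]$. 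I expect this to be the crux of the argument: it is precisely where the hypothesis that \emph{both} endpoints avoid $N_{G'}[y]$ (rather than merely $N_G[y]$) is needed, and where Lemma~\ref{even_obs} does the real work.

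The remaining verifications are routine. For the induced condition, any edge of $H'$ joining two vertices of $W$ arises either from an edge of $H\subseteq E(G)$ or from an even-connection in $H$ with respect to $e_{i_1}\cdots e_{i_t}$; such a walk is also a valid even-connection in $G$ with respect to $e_1\cdots e_s$ (the relevant multiplicities only increase), so by Theorem~\ref{even_connec_equivalent} the edge already lies in $E(G')$ and hence in $E(W)$. Thus $E(W)$ and the edge set of $H'$ agree on the common vertices. The only vertices of $W$ that may fail to lie in $V(H')$ are new vertices $z_u$ whose unique neighbor $u$ has been deleted (i.e. $u\in N_{G'}[y]$); these are isolated in $W$ and therefore contribute nothing to $I(W)$ or its regularity. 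Hence, after discarding such isolated vertices, $W$ is an induced subgraph of $H'$, and the standard fact that the edge ideal of an induced subgraph has regularity at most that of the ambient graph gives $\reg(I(G'\setminus N_{G'}[y]))\le \reg(I(H'))$.
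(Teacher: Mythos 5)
Your proof is correct and takes essentially the same route as the paper's: it uses Theorem~\ref{even_connec_equivalent} to describe the edges of $G'$ and $H'$, applies Lemma~\ref{even_obs} (with $w=y$) to show that an even-connection between two vertices outside $N_{G'}[y]$ cannot pass through $N_{G'}[y]$ and hence survives as an even-connection in $H$, and finishes with the induced-subgraph monotonicity of regularity. If anything, your write-up is more careful than the paper's, which glosses over the polarization vertices $z_u$ and the degenerate case $p_i=y$ that you treat explicitly.
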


\begin{proof} 
Let $\{u,v\} \in E(G' \setminus N_{G'}[y])$. 
By Theorem \ref{even_connec_equivalent}, either $\{u,v\} \in E(G)$ or
 $u$ is an even-connected to $v$ in $G$ with respect to $e_1 \cdots e_s$.
If $\{u, v\} \in E(G)$, then $\{u, v\} \in E(H)$.  
Let $(u=p_0)p_1 \cdots (p_{2k+1}=v)$
be an even-connection in $G$ with respect to $e_1 \cdots e_s$ for some
$k \geq 0$. If $p_i \in N_{G'}[y]$, for some $0 \leq i \leq 2k+1$, then by Lemma \ref{even_obs}, $y$
is even-connected either to $u$ or to $v$. This contradicts the
assumption that $\{u,v\} \in G' \setminus N_{G'}[y]$. Therefore, for
each $0 \leq i \leq 2k+1$, $p_i \notin N_{G'}[y]$. Hence $\{u,v\} \in E(H')$, which
proves $G' \setminus N_{G'}[y]$ is a subgraph of $H'$.
If $a,b \in V(G' \setminus
N_{G'}[y])$ is such that $\{a,b\} \in E(H)$,
then $\{a,b\} \in E(G' \setminus N_{G'}[y])$. Hence 
$G' \setminus N_{G'}[y]$ is an induced subgraph of $H$.
The assertion on the regularity follows from \cite[Proposition
4.1.1]{sean_thesis}.  
\end{proof}

It may be noted that, in the above proof, we did not really use the
very well-covered property of $G$. The result holds true for an
arbitrary graph.

\begin{lemma}\label{ind_th_lemma} Let the notation be as in Set-up \ref{evenconnection_setup}. 
Let $u^2 \in (I(G)^{s+1}:e_1 \cdots e_s)$, $t \in 
(N_{G'}([u]\setminus u)) \cap V(G)$ and $H=G \setminus N_{G}[[u]]$. 
Then $G' \setminus N_{G'}[t]$ is 
an induced subgraph of $H'$, where 
$\{e_1,\ldots,e_s\} \cap E(H) =\{e_{i_1},\ldots,e_{i_k}\}$ and
$H'$ is the graph associated to $\widetilde{(I(H)^{k+1}:e_{i_1} \cdots e_{i_k})}$. 
In particular, $\reg(I(G' \setminus N_{G'}[t])) \leq \reg(I(H')).$
\end{lemma}
\begin{proof}
 Let $\{a,b\} \in E(G' \setminus N_{G'}[t])$. 
 By Theorem \ref{even_connec_equivalent}, either $\{a,b\} \in E(G)$ or
 $a$ is an even-connected to $b$ in $G$ with respect to $e_1 \cdots e_s$.
 Suppose $\{a,b\} \in E(G)$. If $\{a, b\} \cap N_G[ [u] ] = \emptyset$, 
 then $\{a,b\} \in E(H)$. If $\{a, b\} \cap N_G[ [u] ] \neq \emptyset$, then
 by Lemma \ref{even-con-itself}, either $\{a,t\} \in E(G')$ or $\{b,t\} \in E(G')$.
 This is a contradiction to $\{a,b\} \in E(G' \setminus N_{G'}[t])$.
 Therefore, if $\{a,b\} \in E(G)$, then $\{a,b\} \cap N_G[ [u] ] =
 \emptyset$ and
 hence $\{a,b\} \in E(H)$.
 
 Suppose
 $\{a,b\} \in E(G') \setminus E(G)$. 
 For $r \geq 1$, let
 $(a=q_0)q_1 \cdots q_{2r}(q_{2r+1}=b)$ be an even-connection in $G$ with
 respect to $e_1 \cdots e_s$. If $q_i \in N_{G}[[u]]$, for some $i$, then by Lemma \ref{even-con-itself},
 $\{t,q_i\} \in E(G')$. Therefore, by Lemma \ref{even_obs}, either
 $\{t,a\} \in E(G')$ or $\{t,b\} \in E(G')$. This is a contradiction
 to the assumption that 
 $\{a,b\} \in E(G' \setminus N_{G'}[t])$.  
 Therefore $q_i \notin N_{G}[[u]]$, for all 
 $0 \leq i \leq 2r+1$ which implies that $a$ is an even-connected to $b$ in
 $H$ with respect to $e_{i_1} \cdots e_{i_k}$, i.e., $\{a,b\} \in E(H')$.
 Hence 
 $G' \setminus N_{G'}[t]$ is an induced subgraph of $H'$.
 The assertion on the regularity follows from \cite[Proposition
4.1.1]{sean_thesis}. 
\end{proof}

Now we prove that the regularity of $I(G')$ is bounded above by
$\nu(G) + 1$.

\begin{theorem}\label{main-techlemma}
Let $G$ be a very well-covered graph and $e_1, \ldots, e_s$ be edges of $G$, for some $s \geq 1$.
Then, 
\[
 \reg((I(G)^{s+1}:e_1 \cdots e_s)) \leq \nu(G)+1.
\]
\end{theorem}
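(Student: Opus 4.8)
The plan is to pass to a squarefree ideal via polarization and then run the standard inductive recursion on the associated graph $G'$. By Corollary \ref{pol_reg} together with Set-up \ref{evenconnection_setup}, we have $\reg((I(G)^{s+1}:e_1\cdots e_s))=\reg(I(G'))$, so it suffices to prove $\reg(I(G'))\le \nu(G)+1$. If $(I(G)^{s+1}:e_1\cdots e_s)$ is squarefree, then $G'$ is very well-covered by Corollary \ref{g'=verywell} and the bound follows from Corollary \ref{reg_sqfree}; so I would assume the ideal is not squarefree. Then some $u^2$ is a minimal generator, i.e. $u$ is even-connected to itself, and since every minimal generator has degree $2$, the polarization variable $u'$ of $u^2$ occurs in no other generator. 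Hence in $G'$ the vertex $u'$ is a pendant attached to $u$.

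I would argue by induction on $h$, where $2h=|V(G)|$, the base case $h=1$ being squarefree and hence covered above. Writing $[u]=\{x_i,y_i\}$ and $\bar u=[u]\setminus u$, note that $u\in N_{G'}(\bar u)\cap V(G)$, because $\{x_i,y_i\}\in E(G)\subseteq E(G')$. I would then apply the standard recursion coming from the short exact sequence $0\to (R/(I(G'):u))(-1)\to R/I(G')\to R/(I(G'),u)\to 0$, namely
\[
\reg(I(G'))\le \max\{\reg(I(G'\setminus N_{G'}[u]))+1,\ \reg(I(G'\setminus u))\}.
\]
For the first term I would invoke Lemma \ref{ind_th_lemma} with $t=u$: the graph $G'\setminus N_{G'}[u]$ is an induced subgraph of $H'$, where $H=G\setminus N_G[[u]]$ is a very well-covered graph on fewer vertices (by the observation following Lemma \ref{very_well_char}). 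By the induction hypothesis $\reg(I(H'))\le \nu(H)+1$. Moreover $\nu(H)\le \nu(G)-1$: given any induced matching of $H=G\setminus N_G[x_i,y_i]$, every vertex occurring in it lies outside $N_G[x_i,y_i]$ and is therefore adjacent to neither $x_i$ nor $y_i$, so adjoining the edge $\{x_i,y_i\}$ keeps the matching induced. Combining these, $\reg(I(G'\setminus N_{G'}[u]))+1\le \nu(G)+1$, so this branch is under control.

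The remaining term $\reg(I(G'\setminus u))$ is the main obstacle, and I expect it to be the technical heart of the argument. The difficulty is that $G'\setminus u$ is an \emph{open} deletion, to which Lemmas \ref{tech_lemma} and \ref{ind_th_lemma} do not apply: those lemmas crucially use that a connecting path meeting $N_{G'}[y]$ forces $y$ to be even-connected to an endpoint (via Lemma \ref{even_obs}), a phenomenon special to closed-neighbourhood deletions. To handle it I would run a secondary induction on the number $m$ of self-even-connected vertices. In $G'\setminus u$ the pendant $u'$ is isolated and may be discarded, while every other self-even-connected vertex still carries its pendant; so I would apply the recursion again at such a vertex $v$. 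Its closed-neighbourhood branch $(G'\setminus u)\setminus N_{G'}[v]$ is an induced subgraph of $(G\setminus N_G[[v]])'$ by Lemma \ref{ind_th_lemma} (followed by deleting $u$), and is bounded by $\nu(G)+1$ exactly as above, using $\nu(G\setminus N_G[[v]])\le\nu(G)-1$ and the structural control of the new edges of $G'$ furnished by Lemmas \ref{useful}, \ref{even_obs} and \ref{even-con-itself}. Iterating, I peel off all self-even-connected vertices; every closed-neighbourhood branch is at most $\nu(G)+1$, and the terminal open deletion $G'\setminus\{u_1,\dots,u_m\}$ is pendant-free, hence has a squarefree edge ideal. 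The delicate final point, which I regard as the crux, is to bound this terminal graph by $\nu(G)+1$: I would identify it, up to isolated vertices, with an induced subgraph of the very well-covered graph attached to a squarefree colon, so that Corollary \ref{g'=verywell} and the monotonicity $\nu(\cdot)\le\nu(G)$ deliver the bound through Corollary \ref{reg_sqfree}.
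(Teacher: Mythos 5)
Your reduction via polarization, your base case, and your first branch are all sound: $t=u$ is a legitimate choice in Lemma \ref{ind_th_lemma} because $u\in N_{G'}([u]\setminus u)\cap V(G)$, and $\nu(G\setminus N_G[[u]])\le\nu(G)-1$ indeed gives $\reg(I(G'\setminus N_{G'}[u]))+1\le\nu(G)+1$. The genuine gap is exactly the point you flag as the crux: the open-deletion term. After you peel off the self-even-connected vertices $u_1,\dots,u_m$, you must bound $\reg(I(G'\setminus\{u_1,\dots,u_m\}))$, and your proposed identification of this terminal graph with an induced subgraph of ``the very well-covered graph attached to a squarefree colon'' is never constructed and is not available in general: the terminal graph retains every even-connection edge between surviving vertices whose witnessing path travels through the deleted $u_i$'s, so it is strictly larger than the colon graph of $G\setminus\{u_1,\dots,u_m\}$ (the natural candidate to which Corollary \ref{g'=verywell} could apply), and the only graph it visibly embeds into as an induced subgraph is $G'$ itself --- using that would be circular. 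So the proposal does not close.

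The paper's proof avoids ever producing an open deletion, and this is the device you are missing. Instead of the single exact sequence at $u$, it sets $\bar u=[u]\setminus u$, $U=N_{G'}(\bar u)\cap V(G)=\{t_1,\dots,t_l\}$, and runs a chain of exact sequences coloning by $t_1,\dots,t_l$ successively. Every term produced is then either a colon $((J,t_1,\dots,t_{i-1}):t_i)$, which corresponds to an induced subgraph of $G'\setminus N_{G'}[t_i]$ and is bounded by $\nu(G)$ through Lemma \ref{ind_th_lemma} with $H=G\setminus N_G[[u]]$, or the final sum $(J,U)$, which corresponds to the closed-neighborhood deletion $G'\setminus N_{G'}[\bar u]$ and is bounded by $\nu(G)+1$ through Lemma \ref{tech_lemma} with $H=G\setminus N_G[\bar u]$. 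Moreover, the induction there is run not on $|V(G)|$ but on the number $r$ of self-even-connected vertices: since $u\notin V(H)$ in both cases, $|W_H|<r$, so the inductive hypothesis applies to the (possibly non-squarefree) colon ideals of $H$. If you want to salvage your argument, replace your decomposition at $u$ by this iterated decomposition over the neighbors of $\bar u$; that is precisely what converts your problematic open deletion into a closed-neighborhood deletion, for which the paper's lemmas suffice.
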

\begin{proof}
For any graph $K$,
let 
$$W_K(e_1 \cdots e_s)=\Big \{ u \in V(K) \mid u \text{ is an even-connected to itself in $K$ with 
respect to $e_1 \cdots e_s$} \Big\}.$$
Let $G'$ be the graph associated to $\widetilde{(I(G)^{s+1}:e_1 \cdots e_s)}$
contained in an  appropriate polynomial ring $R_1$ and $|W_G(e_1 \cdots e_s)|=r$.
We prove the assertion by induction
on $r$. 

If $r=0$, then for any $e_1,\ldots,e_s \in E(G)$, $s \geq 1$, 
$(I(G)^{s+1}:e_1 \cdots e_s)$ is a squarefree monomial ideal. 
Therefore, by Corollary \ref{reg_sqfree}, 
$$\reg((I(G)^{s+1}:e_1 \cdots e_s))\leq \nu(G)+1.$$
By induction, assume that if $L$ is a very well-covered graph with
$|W_L(f_1 \cdots f_s)|<r$ for $f_1,\ldots,f_s \in E(L)$,  then
$\reg((I(L)^{s+1} : f_1 \cdots f_s)) \leq \nu(L) + 1$.

Let $G$ be a very well-covered graph with $|W_G(e_1 \cdots e_s)|
= r$ for $e_1,\ldots,e_s \in E(G)$.  Let $G'$ be the graph
associated to $\widetilde{(I(G)^{s+1}:e_1\cdots e_s)}$, for some $e_1,
\ldots, e_s \in E(G)$.
Set $W_G(e_1\cdots e_s)=\{u_1,\ldots,u_r\}$, $[u_r] \setminus
u_r=u_r', U=(N_{G'}(u_r') \cap V(G))=\{t_1,\ldots,t_l\}$ and
$J=I(G')$. It follows from the exact sequences
\begin{eqnarray*}
  0 & \longrightarrow & \frac{R_1}{(J : t_1)}(-1)
  \overset{\cdot t_1}{\longrightarrow} \frac{R_1}{J} \longrightarrow
  \frac{R_1}{(J , t_1)} \longrightarrow 0; \\
  0 & \longrightarrow & \frac{R_1}{((J, t_1) : t_2)}(-1)
  \overset{\cdot t_2}{\longrightarrow} \frac{R_1}{(J,t_1)} \longrightarrow
  \frac{R_1}{(J, t_1, t_2)} \longrightarrow 0; \\
& & \hspace*{1cm} \vdots\hspace*{5cm}\vdots \hspace*{3cm}\vdots \\
0 & \longrightarrow & \frac{R_1}{((J,
  t_1,\ldots,t_{l-1}):t_l)}(-1) 
\overset{\cdot t_l}{\longrightarrow}
\frac{R_1}{(J, t_1,\ldots,t_{l-1})} \longrightarrow
\frac{R_1}{(J, U)} \longrightarrow 0
\end{eqnarray*}
that
\[
  \reg(R_1/J)  \leq  \max \left\{
	\begin{array}{l}
	  \reg\left(\frac{R_1}{(J :t_1)}\right)+1, ~\reg\left(\frac{R_1}{((J,t_1):
		t_2)}\right)+1,\\
		\ldots \ldots \ldots \\
		\reg\left(\frac{R_1}{(J,t_1,\ldots,t_{l-1}):t_l)}\right) + 1,~ 
		\reg\left(\frac{R_1}{(J,~U)}\right).
\end{array}\right.
\]
We now prove that each of the regularities appearing on the right hand
side of the above inequality is bounded above by $\nu(G)$. 
 
Let $H=G \setminus N_{G}[u_r']$ and $\{e_1, \ldots e_s\} \cap
E(H)=\{e_{i_1}, \ldots, e_{i_k}\}$.
We have
$$
 \begin{array}{lcll}
    \reg(J,U) & = &\reg(I(G' \setminus N_{G'}[u_r'])) & 
    \mbox{ (by \cite[Remark 2.5]{selvi_ha})}\\ 
	& \leq & \reg((I(H)^{k+1}:e_{i_1}\cdots e_{i_k})) & 
	\mbox{ (by Lemma \ref{tech_lemma}})
\end{array}
$$
Since $H$ is a very well-covered graph and $u_r^2 \notin
(I(H)^{k+1}:e_{i_1}\cdots e_{i_k})$,
$|W_H(e_{i_1}\cdots e_{i_k})|<r$. Hence, by induction, we get
$$\reg(J,U) \leq \reg(I(H)^{k+1}:e_{i_1}\cdots e_{i_k}) \leq \nu(H)+1 \leq \nu(G)+1.$$

Let $H = G \setminus N_{G}[[u_r]]$ and $E(H) \cap \{e_1, \ldots, e_s\} =\{e_{i_1},
\ldots, e_{i_\ell}\}$. Since
$H$ is a very well-covered graph and $u_r^2
\notin (I(H)^{\ell+1} : e_{i_1}\cdots e_{i_\ell})$, we have
$$
\begin{array}{lcll}
  \reg(J:t_i) & = & \reg(I(G' \setminus N_{G'}[t_i]))\leq \reg(I(H)^{\ell+1} : e_{i_1}\cdots e_{i_\ell})
  & \text{(By
	Lemma \ref{ind_th_lemma})}\\
	& \leq &\nu(H)+1 & \text{(By induction hypothesis)}\\
	& \leq & \nu(G), &
  \end{array}
$$
where the last inequality follows since $\{f_1, \ldots, f_t, [u_r]\}$ is
an induced matching in $G$ if $\{f_1, \ldots, f_t\}$ is an induced
matching in $H$.

Since $((J,t_1,\ldots,t_{i-1}):t_i)$
corresponds to an induced subgraph of $(J:t_i)$, it follows that
$$\reg \left( \frac{R_1}{((J,t_1,\ldots,t_{i-1}):t_i)}\right)+1 \leq 
\reg \left( \frac{R_1}{(J:t_i)}\right)+1 \leq \nu(G).$$

Therefore, 
$\reg\left( \frac{R_1}{J}\right) \leq \nu(G).$ 
\end{proof}

Now the main theorem can be derived as a consequence of the above
results:
\begin{theorem}\label{main} Let $G$ be a very well-covered graph. Then
  for all $s \geq 1$, 
  \[\reg(I(G)^s)=2s+\nu(G)-1.\]
\end{theorem}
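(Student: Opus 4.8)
The plan is to establish the equality by proving the two inequalities $\reg(I(G)^s) \le 2s + \nu(G) - 1$ and $\reg(I(G)^s) \ge 2s + \nu(G) - 1$ for all $s \ge 1$, the former by induction on $s$ using the machinery developed above, and the latter by invoking a general lower bound valid for arbitrary graphs.

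For the upper bound I would induct on $s$. The base case $s = 1$ is exactly the theorem of Mahmoudi et al., which asserts $\reg(I(G)) = \nu(G) + 1$; since $\nu(G) + 1 = 2 \cdot 1 + \nu(G) - 1$, the claim holds. For the inductive step, assume $\reg(I(G)^s) = 2s + \nu(G) - 1$ and apply Banerjee's bound \cite[Theorem 5.2]{banerjee}, namely $\reg(I(G)^{s+1}) \le \max_M \{ \reg((I(G)^{s+1} : M)) + 2s,\ \reg(I(G)^s) \}$, where $M$ ranges over the minimal monomial generators of $I(G)^s$. Every such $M$ is an $s$-fold product $e_1 \cdots e_s$ of edges of $G$, so Theorem \ref{main-techlemma} yields $\reg((I(G)^{s+1} : M)) \le \nu(G) + 1$. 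Consequently the first entry in the maximum is at most $2s + \nu(G) + 1$, while the second entry equals $2s + \nu(G) - 1$ by the inductive hypothesis; hence $\reg(I(G)^{s+1}) \le 2s + \nu(G) + 1 = 2(s+1) + \nu(G) - 1$, closing the induction.

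For the lower bound I would use the fact that $\reg(I(G)^s) \ge 2s + \nu(G) - 1$ holds for every finite simple graph $G$ and every $s \ge 1$. The idea is to restrict to a maximum induced matching: its induced subgraph is a disjoint union of $\nu(G)$ edges, for which the regularity of the $s$-th power is exactly $2s + \nu(G) - 1$, and passing to this induced subgraph cannot increase the regularity of the corresponding power. Combining this with the upper bound proved above gives $\reg(I(G)^s) = 2s + \nu(G) - 1$ for all $s \ge 1$.

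Since the substantive work has already been carried out in Theorem \ref{main-techlemma}, where the uniform bound $\reg((I(G)^{s+1} : M)) \le \nu(G) + 1$ is obtained, the present argument is largely bookkeeping rather than a site of genuine difficulty. The two points that require care are: verifying that every minimal generator of $I(G)^s$ is genuinely an $s$-fold product of edges so that Theorem \ref{main-techlemma} is applicable, and correctly matching the $s = 1$ case of the induction with the Mahmoudi et al. base case. The only input not produced within the paper is the general lower bound, which must be cited from the literature as a statement about arbitrary graphs.
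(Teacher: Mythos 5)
Your proposal is correct and follows essentially the same route as the paper: the lower bound is the general inequality $\reg(I(G)^s) \geq 2s+\nu(G)-1$ cited from Beyarslan--H\`a--Trung, and the upper bound is proved by induction on $s$ with base case the Mahmoudi et al.\ result $\reg(I(G))=\nu(G)+1$ and inductive step combining Banerjee's bound with Theorem \ref{main-techlemma}. The two points of care you flag (minimal generators of $I(G)^s$ being $s$-fold edge products, and the arithmetic matching at $s=1$) are exactly the bookkeeping the paper's proof relies on.
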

\begin{proof} For any $s \geq 1$, by \cite[Theorem 4.5]{selvi_ha}, we have
$2s+\nu(G)-1 \leq \reg(I(G)^s).$
We need to prove that $\reg(I(G)^s) \leq 2s+\nu(G)-1$, for all $s \geq 1$.
We prove this by induction on $s$. If $s=1$, then the assertion follows from \cite[Theorem 4.12]{mohammad}.
Assume that $s > 1$.
By applying \cite[Theorem 5.2]{banerjee}
and using induction, it is enough to prove that
 for edges $e_1,\ldots,e_s$ of $G$,
 $\reg(I(G)^{s+1}:e_1 \cdots e_s)\leq \nu(G)+1$ for all $s \geq 1$. This follows from
 Theorem \ref{main-techlemma}.
\end{proof}

Since unmixed bipartite graphs are very well-covered graphs, we
obtain 
\begin{corollary}\cite[Corollary 5.1(1)]{jayanthan}\label{reg_unmixed}
If $G$ is an unmixed bipartite graph, then for all
$s \geq 1$, 
\[
 \reg(I(G)^s)=2s+\nu(G)-1.
\]
\end{corollary}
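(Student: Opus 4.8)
The plan is to establish the two inequalities separately. The lower bound $2s + \nu(G) - 1 \leq \reg(I(G)^s)$ holds for an arbitrary graph by \cite[Theorem 4.5]{selvi_ha}, so the entire content lies in the matching upper bound $\reg(I(G)^s) \leq 2s + \nu(G) - 1$ for every $s \geq 1$. I would prove this upper bound by induction on $s$. For the base case $s = 1$, the equality $\reg(I(G)) = \nu(G) + 1 = 2\cdot 1 + \nu(G) - 1$ is precisely the computation of the regularity of the edge ideal of a very well-covered graph carried out in \cite[Theorem 4.12]{mohammad}.

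For the inductive step, assume $s > 1$ and that $\reg(I(G)^s) \leq 2s + \nu(G) - 1$. The key tool is Banerjee's recursive estimate \cite[Theorem 5.2]{banerjee}, which gives
\[
\reg(I(G)^{s+1}) \leq \max_{M}\bigl\{ \reg\bigl(I(G)^{s+1} : M\bigr) + 2s,\ \reg(I(G)^s) \bigr\},
\]
where $M$ ranges over the minimal monomial generators of $I(G)^s$. Every such $M$ is an $s$-fold product of edges $e_1 \cdots e_s$, so that $\reg(I(G)^{s+1} : M) = \reg(I(G)^{s+1} : e_1 \cdots e_s)$. I would then invoke the central technical result Theorem \ref{main-techlemma}, which bounds this colon regularity by $\nu(G) + 1$ regardless of whether the colon ideal is squarefree. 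This yields $\reg(I(G)^{s+1} : M) + 2s \leq (\nu(G) + 1) + 2s = 2(s+1) + \nu(G) - 1$. Since the second argument of the maximum is bounded by $2s + \nu(G) - 1 \leq 2(s+1) + \nu(G) - 1$ via the induction hypothesis, both arguments are at most $2(s+1) + \nu(G) - 1$, and the induction closes.

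The routine bookkeeping above makes clear that the genuine difficulty is entirely concentrated in Theorem \ref{main-techlemma}; once the uniform bound $\reg(I(G)^{s+1} : e_1 \cdots e_s) \leq \nu(G) + 1$ is in hand, the main theorem follows formally. The hard part is therefore not the present deduction but the control of the colon ideal: in the squarefree case one shows the associated graph $G'$ is again very well-covered (Corollary \ref{g'=verywell}) and applies the $s=1$ regularity formula together with monotonicity of the induced matching number from \cite[Proposition 4.4]{jayanthan}; the substantive obstacle is the non-squarefree case, where $G'$ acquires new vertices and edges from even-connections and need not be very well-covered, which is exactly what forces the inductive analysis on the number of self-even-connected vertices underlying Theorem \ref{main-techlemma}.
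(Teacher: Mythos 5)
Your argument is correct and is essentially the paper's own: the paper deduces this corollary in one line from Theorem \ref{main} (unmixed bipartite graphs are very well-covered), and the induction you write out --- lower bound from \cite[Theorem 4.5]{selvi_ha}, base case from \cite[Theorem 4.12]{mohammad}, inductive step from \cite[Theorem 5.2]{banerjee} combined with Theorem \ref{main-techlemma} --- is precisely the paper's proof of Theorem \ref{main}, inlined. The only step you leave implicit is the one fact this corollary actually adds on top of Theorem \ref{main}: that an unmixed bipartite graph $G$ (without isolated vertices, per the paper's standing convention) is very well-covered --- both sides of the bipartition are maximal independent sets, so their complements are minimal vertex covers and unmixedness forces $\h(I(G)) = |V(G)|/2$ --- and this should be stated explicitly before applying \cite[Theorem 4.12]{mohammad} and Theorem \ref{main-techlemma} to $G$, since both require the very well-covered hypothesis.
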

For a graph $G$ on $n$ vertices, let $W(G)$ be the whiskered graph on $2n$ vertices obtained
by adding a pendent vertex (an edge to a new vertex of degree 1) to every vertex of $G$.

Moghimian et al., \cite[Theorem 2.5]{moghimian_seyed_yassemi} proved that $\reg(I(W(C_n)^s)) = 2s + \nu(W(G)) - 1$
for all $s \geq 1$ and Jayanthan et al. \cite[Corollary
5.1(2)]{jayanthan} proved that if $G$ is a bipartite graph, then
$\reg(I(W(G))^s) = 2s + \nu(W(G)) - 1$ for all $s \geq 1$.
Since whiskered graphs are very well-covered graphs, we
obtain asymptotic regularity expression for this class of graphs as
well:

\begin{corollary} If $G$ is a graph, then for all $s \geq 1$,
\[\reg\left(I(W(G))^s \right) = 2s + \nu(W(G)) - 1.\]
\end{corollary}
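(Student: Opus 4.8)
The plan is to reduce the statement to the main theorem (Theorem \ref{main}) by showing that every whiskered graph $W(G)$ is itself a very well-covered graph; once that is established, the identity $\reg(I(W(G))^s) = 2s + \nu(W(G)) - 1$ follows immediately by applying Theorem \ref{main} to $W(G)$.

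First I would fix notation. Write $V(G) = \{x_1, \ldots, x_n\}$ and let $w_i$ denote the pendant vertex attached to $x_i$, so that $V(W(G)) = \{x_1, \ldots, x_n, w_1, \ldots, w_n\}$ and $E(W(G)) = E(G) \cup \{\{x_i, w_i\} \mid 1 \le i \le n\}$. Since each $x_i$ is an endpoint of the whisker edge $\{x_i, w_i\}$ and each $w_i$ is joined to $x_i$, the graph $W(G)$ has no isolated vertices.

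Next I would verify that $W(G)$ is unmixed with $\h(I(W(G)))$ equal to half the number of vertices. The key observation is that the $n$ whisker edges $\{x_i, w_i\}$ are pairwise disjoint, so every vertex cover must contain at least one endpoint of each of them, giving $|C| \ge n$ for any vertex cover $C$. For the reverse estimate, suppose $C$ is a minimal vertex cover and both $x_i$ and $w_i$ lie in $C$ for some $i$. Since $\deg_{W(G)}(w_i) = 1$, the vertex $w_i$ covers only the edge $\{x_i, w_i\}$, which is already covered by $x_i$; hence $C \setminus \{w_i\}$ is still a vertex cover, contradicting minimality. Therefore every minimal vertex cover contains exactly one of $x_i, w_i$ for each $i$, so all minimal vertex covers have cardinality exactly $n = |V(W(G))|/2$. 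This shows that $W(G)$ is unmixed and that $\h(I(W(G))) = n = \tfrac{|V(W(G))|}{2}$, i.e., $W(G)$ is very well-covered. Concretely, one may take $X = \{x_1, \ldots, x_n\}$ as a minimal vertex cover and $Y = \{w_1, \ldots, w_n\}$ as a maximal independent set, so that $W(G)$ fits the hypotheses of Set-up \ref{setup_well}.

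Finally, applying Theorem \ref{main} to the very well-covered graph $W(G)$ yields $\reg(I(W(G))^s) = 2s + \nu(W(G)) - 1$ for all $s \ge 1$, which is precisely the claim. There is no genuine obstacle beyond the elementary combinatorial verification that $W(G)$ is very well-covered, as all of the analytic content has already been absorbed into Theorem \ref{main}.
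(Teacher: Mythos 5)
Your proposal is correct and takes essentially the same route as the paper: both reduce the corollary to Theorem \ref{main} via the fact that whiskered graphs are very well-covered. The only difference is that you verify this fact directly with the minimal-vertex-cover argument (every minimal cover picks exactly one endpoint of each pendant edge), whereas the paper simply invokes it as known, citing \cite{crupi} and \cite{Faridi05}.
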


Next, we study the regularity of powers of
edge ideals of join of very well-covered graphs.

\begin{definition}\label{joingraph}
Let $G_1=(V(G_1),E(G_1))$ and $G_2=(V(G_2),E(G_2))$ be graphs with
disjoint vertex sets. The join of $G_1$ and $G_2$,
denoted by $G_1* G_2$, is the graph on the vertex set $V(G_1) \cup
V(G_2)$ whose edge set
is $E(G_1 *G_2) = E(G_1) \cup E(G_2) \cup \Big\{\{x, y\} \mid x \in V(G_1) \text{ and } y \in V (G_2)
\Big\}.$
\end{definition}

It may be noted that for very well-covered graphs $G_1, \ldots, G_k$,
the product, $G_1* \cdots * G_k$ is not necessarily a very
well-covered graph. However, we obtain the linear expression for
$\reg(I(G_1* \cdots *G_k)^s)$, for all $s \geq 1$.

\begin{corollary} \label{join_scm}
Let $G_1,\ldots,G_k$ be very well-covered graphs with 
$V(G_i) \cap V(G_j)=\emptyset$, for all $1 \leq i \neq j \leq k $. 
Then for all $s \geq 1$,
$$\reg(I(G_1* \cdots * G_k)^s)=2s+\max\{\nu(G_1),\ldots,\nu(G_k)\}-1$$
\end{corollary}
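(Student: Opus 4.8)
The plan is to reduce the statement about $\reg(I(G_1 * \cdots * G_k)^s)$ to the main theorem (Theorem \ref{main}) by exhibiting $G_1 * \cdots * G_k$ as a graph whose edge ideal powers have regularity controlled by the induced matching number, even though the join itself need not be very well-covered. First I would observe that the key combinatorial fact is a formula for the induced matching number of a join: any induced matching in $G_1 * \cdots * G_k$ cannot use edges from two different components, because every vertex of $G_i$ is adjacent to every vertex of $G_j$ for $i \neq j$, and it cannot use any of the ``cross'' edges together with another edge, for the same reason. Hence an induced matching of the join is entirely contained in a single $G_i$, which gives
\[
\nu(G_1 * \cdots * G_k) = \max\{\nu(G_1), \ldots, \nu(G_k)\}.
\]
This identifies the constant in the claimed expression with $\nu$ of the join.

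Next I would invoke the general lower bound $2s + \nu(H) - 1 \leq \reg(I(H)^s)$ (the inequality from \cite[Theorem 4.5]{selvi_ha} used in the proof of Theorem \ref{main}), applied to $H = G_1 * \cdots * G_k$, which immediately yields
\[
2s + \max\{\nu(G_1),\ldots,\nu(G_k)\} - 1 \leq \reg(I(G_1 * \cdots * G_k)^s).
\]
The substance of the corollary is therefore the matching upper bound. The approach I would take for the upper bound is to recognize that joins interact well with regularity of powers through a known result on edge ideals of joins (or more generally, the Betti splitting / sum structure of ideals corresponding to a join). The main technical point will be to show $\reg(I(G_1 * \cdots * G_k)^s) \leq 2s + \max_i \nu(G_i) - 1$; here I expect to use the fact that the very well-covered case is already settled by Theorem \ref{main}, so that each $\reg(I(G_i)^t) = 2t + \nu(G_i) - 1$ is known exactly, and then to feed these into a formula expressing the regularity of powers of a join ideal in terms of the regularities of powers of its constituents.

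The hard part will be establishing the precise relationship between $\reg(I(G_1 * \cdots * G_k)^s)$ and the $\reg(I(G_i)^t)$. The natural tool is a result governing how regularity of powers behaves under the join operation: since $E(G_1 * \cdots * G_k)$ consists of the internal edges of each $G_i$ together with all cross edges among distinct components, the edge ideal $I(G_1 * \cdots * G_k)$ can be analyzed via the complete multipartite structure overlaid on the disjoint pieces. I would argue, either by appealing to an existing theorem on regularity of powers of joins or by a direct induction using Banerjee's bound \cite[Theorem 5.2]{banerjee} together with an analysis of the colon ideals $(I^{s+1} : M)$ for a join, that the maximum over the components dominates and the cross edges contribute only the generic linear growth $2s$. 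Once the upper bound $2s + \max_i \nu(G_i) - 1$ is in hand, combining it with the lower bound and the matching-number identity $\nu(G_1 * \cdots * G_k) = \max_i \nu(G_i)$ closes the argument. The principal obstacle, and the step requiring the most care, is controlling the colon ideals arising from generators that mix cross edges with internal edges, since these govern whether the regularity of the join's powers genuinely collapses to the maximum of the component contributions.
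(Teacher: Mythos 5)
Your lower bound and the identity $\nu(G_1 * \cdots * G_k) = \max_i \nu(G_i)$ are both correct and match the paper (the paper cites \cite[Lemma 3.14]{amir} for the latter rather than arguing it directly, but your argument is fine). The problem is that the upper bound --- which you yourself identify as ``the substance of the corollary'' --- is never actually established. You defer it to either ``an existing theorem on regularity of powers of joins'' that you do not identify, or to ``a direct induction using Banerjee's bound'' whose key step (controlling colon ideals $(I^{s+1}:M)$ that mix cross edges with internal edges) you explicitly flag as unresolved. Note also that Theorem \ref{main-techlemma} cannot be applied to the join itself, since the paper points out that $G_1 * \cdots * G_k$ need not be very well-covered; so the induction you sketch would have to rebuild the colon-ideal analysis for a graph outside the class the paper's machinery covers.

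The specific idea your proposal is missing is that the property which transfers across joins is not the numerical data $\reg(I(G_i)^t) = 2t + \nu(G_i) - 1$ (no formula of the kind you hope for, expressing $\reg$ of powers of a join in terms of $\reg$ of powers of the pieces, is invoked or, to my knowledge, available), but rather the \emph{colon-ideal property}: membership in the class
$\mathcal{A} = \{G \mid \reg((I(G)^{s+1}:e_1\cdots e_s)) \leq \reg(I(G)) \text{ for all } s\text{-fold products}\}$.
The paper's proof runs: Theorem \ref{main-techlemma} together with $\reg(I(G_i)) = \nu(G_i)+1$ shows each $G_i \in \mathcal{A}$; the external result \cite[Theorem 4.4]{selva} says $\mathcal{A}$ is closed under joins, so $G_1 * \cdots * G_k \in \mathcal{A}$; and \cite[Theorem 4.5]{selva} converts membership in $\mathcal{A}$ into the upper bound $\reg(I(G_1 * \cdots * G_k)^s) \leq 2s + \reg(I(G_1*\cdots*G_k)) - 2$, which \cite[Lemma 3.14]{amir} identifies with $2s + \max_i \nu(G_i) - 1$. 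Without this (or an equivalent proof of the join-closure of the colon-ideal bound, which is genuinely nontrivial), your argument does not close.
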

\begin{proof}
By \cite[Theorem 4.5]{selvi_ha} and \cite[Lemma 3.14]{amir}, for all $s \geq 1$
$$2s+\max\{\nu(G_1),\ldots,\nu(G_k)\}-1 \leq \reg(I(G_1 * \cdots * G_k)^s).$$
Let $\mathcal{A}=\{G \mid \reg((I(G)^{s+1}:e_1 \cdots e_s)) \leq \reg(I(G)),
\text{ for any $s$-fold product $e_1 \cdots e_s$, $s \geq 1$}\}$. By Theorem
\ref{main-techlemma}, it follows that $G_1, \ldots, G_k \in \mathcal{A}$.
Hence by \cite[Theorem 4.4]{selva}, we get $G_1*\cdots
* G_k \in \mathcal{A}$. Therefore, it follows from \cite[Theorem 4.5]{selva} and \cite[Lemma
3.14]{amir}, that for all $s \geq 1$,
$\reg(I(G_1* \cdots * G_k)^s) \leq 2s+\max \{\nu(G_1),\ldots,\nu(G_k)\}-1.$
Hence the assertion follows.
\end{proof}

It follows from Theorem \ref{main} that if $G$ is a very well-covered
graph, then $\reg(I(G)^s) = 2s + \nu(G) - 1$ for all $s \geq 2$.
As a natural extension of this result, one tend to think that the same
expression may hold true for well-covered graphs. This is not the
case. For example, let 
$I=(x_1x_2,x_2x_3,x_3x_4,x_4x_5,x_5x_1,x_1x_6,x_6x_9,x_6x_7,x_7x_8)$
and $G$ be the associated graph. It can be easily verified that $G$ is
a  well-covered graph with $\nu(G)=2$, but not a very well-covered graph.
 By \cite[Corollary 3.8 and Theorem 5.3]{ABS}, for $s \geq 1$,
$2s+\nu(G)-1<\reg(I^s)=2s+\nu(G)=2s+2.$
Also, there exist well-covered graphs $G$  such that $\reg(I(G)^s) =
2s + \nu(G) - 1$ for all $s \geq 2$. For example,
if $G=C_5$, then by \cite[Theorem 5.2]{selvi_ha},
$\reg(I(G)^s) < 2s+\nu(G)-1$ for all $s \geq 2$.
Beyarslan et al. raised the question for which classes of
graphs the equality $\reg(I(G)^s) = 2s + \nu(G) - 1$ holds for $s \gg
0$, \cite[Question 5.4]{selvi_ha}. This seems to be a rather tough question to answer. 
Therefore, we would like to ask:
\begin{question}
Characterize well-covered graphs $G$ for which $\reg(I(G)^s) = 2s +
\nu(G) - 1$ for all $s \gg 0$?
\end{question}

\vskip 2mm
\noindent
\textbf{Acknowledgement:} The computational commutative algebra
package Macaulay 2 \cite{M2} was heavily used to compute several
examples.  We also would
like to thank Selvi Beyarslan for going through the manuscript and
making some useful suggestions. The second author would like to thank the National Board for
Higher Mathematics, India for the financial support. We also thank the
referee for carefully reading the manuscript and making several
suggestions that improved the exposition.

\bibliographystyle{abbrv}  
\bibliography{refs_reg}

\begin{thebibliography}{10}

\bibitem{banerjee1}
A.~Alilooee and A.~Banerjee.
\newblock Powers of edge ideals of regularity three bipartite graphs.
\newblock {\em J. Commut. Algebra}, 9(4):441--454, 2017.

\bibitem{ABS}
A.~{Alilooee}, S.~{Beyarslan}, and S.~{Selvaraja}.
\newblock {Regularity of powers of edge ideals of unicyclic graphs}.
\newblock {\em ArXiv e-prints}, Feb. 2017.

\bibitem{banerjee}
A.~Banerjee.
\newblock The regularity of powers of edge ideals.
\newblock {\em J. Algebraic Combin.}, 41(2):303--321, 2015.

\bibitem{Berle}
D.~Berlekamp.
\newblock Regularity defect stabilization of powers of an ideal.
\newblock {\em Math. Res. Lett.}, 19(1):109--119, 2012.

\bibitem{selvi_ha}
S.~Beyarslan, H.~T. H{\`a}, and T.~N. Trung.
\newblock Regularity of powers of forests and cycles.
\newblock {\em J. Algebraic Combin.}, 42(4):1077--1095, 2015.

\bibitem{Chardin2}
M.~Chardin.
\newblock Powers of ideals and the cohomology of stalks and fibers of
  morphisms.
\newblock {\em Algebra Number Theory}, 7(1):1--18, 2013.

\bibitem{Conca}
A.~Conca.
\newblock Regularity jumps for powers of ideals.
\newblock In {\em Commutative algebra}, volume 244 of {\em Lect. Notes Pure
  Appl. Math.}, pages 21--32. Chapman \& Hall/CRC, Boca Raton, FL, 2006.

\bibitem{crupi}
M.~Crupi, G.~Rinaldo, and N.~Terai.
\newblock Cohen-{M}acaulay edge ideal whose height is half of the number of
  vertices.
\newblock {\em Nagoya Math. J.}, 201:117--131, 2011.

\bibitem{CHT}
S.~D. Cutkosky, J.~Herzog, and N.~V. Trung.
\newblock Asymptotic behaviour of the {C}astelnuovo-{M}umford regularity.
\newblock {\em Compositio Math.}, 118(3):243--261, 1999.

\bibitem{EH}
D.~Eisenbud and J.~Harris.
\newblock Powers of ideals and fibers of morphisms.
\newblock {\em Math. Res. Lett.}, 17(2):267--273, 2010.

\bibitem{EU}
D.~Eisenbud and B.~Ulrich.
\newblock Notes on regularity stabilization.
\newblock {\em Proc. Amer. Math. Soc.}, 140(4):1221--1232, 2012.

\bibitem{Faridi05}
S.~Faridi.
\newblock Cohen-{M}acaulay properties of square-free monomial ideals.
\newblock {\em J. Combin. Theory Ser. A}, 109(2):299--329, 2005.

\bibitem{GV05}
I.~Gitler and C.~E. Valencia.
\newblock Bounds for invariants of edge-rings.
\newblock {\em Comm. Algebra}, 33(5):1603--1616, 2005.

\bibitem{gitler}
I.~{Gitler} and C.~E. {Valencia}.
\newblock {On bounds for some graph invariants}.
\newblock {\em ArXiv Mathematics e-prints}, Oct. 2005.

\bibitem{M2}
D.~R. Grayson and M.~E. Stillman.
\newblock Macaulay2, a software system for research in algebraic geometry.
\newblock Available at \url{http://www.math.uiuc.edu/Macaulay2/}.

\bibitem{Ha}
H.~T. H{\`a}.
\newblock Asymptotic linearity of regularity and {$a^\ast$}-invariant of powers
  of ideals.
\newblock {\em Math. Res. Lett.}, 18(1):1--9, 2011.

\bibitem{Ha2}
H.~T. H{\`a}.
\newblock Regularity of squarefree monomial ideals.
\newblock In {\em Connections between algebra, combinatorics, and geometry},
  volume~76 of {\em Springer Proc. Math. Stat.}, pages 251--276. Springer, New
  York, 2014.

\bibitem{Herzog'sBook}
J.~Herzog and T.~Hibi.
\newblock {\em Monomial ideals}, volume 260 of {\em Graduate Texts in
  Mathematics}.
\newblock Springer-Verlag London, Ltd., London, 2011.

\bibitem{hhz}
J.~Herzog, T.~Hibi, and X.~Zheng.
\newblock Monomial ideals whose powers have a linear resolution.
\newblock {\em Math. Scand.}, 95(1):23--32, 2004.

\bibitem{sean_thesis}
S.~Jacques.
\newblock {\em Betti numbers of graph ideals}.
\newblock PhD thesis, University of Sheffield, 2004.

\bibitem{jayanthan}
A.~V. Jayanthan, N.~Narayanan, and S.~Selvaraja.
\newblock Regularity of powers of bipartite graphs.
\newblock {\em J. Algebraic Combin.}, 47(1):17--38, 2018.

\bibitem{KTY}
K.~Kimura, N.~Terai, and S.~Yassemi.
\newblock {The projective dimension of the edge ideal of a very well-covered
  graph}.
\newblock {\em Nagoya Math. J.}, (to appear).

\bibitem{vijay}
V.~Kodiyalam.
\newblock Asymptotic behaviour of {C}astelnuovo-{M}umford regularity.
\newblock {\em Proc. Amer. Math. Soc.}, 128(2):407--411, 2000.

\bibitem{mohammad}
M.~Mahmoudi, A.~Mousivand, M.~Crupi, G.~Rinaldo, N.~Terai, and S.~Yassemi.
\newblock Vertex decomposability and regularity of very well-covered graphs.
\newblock {\em J. Pure Appl. Algebra}, 215(10):2473--2480, 2011.

\bibitem{moghimian_seyed_yassemi}
M.~Moghimian, S.~A.~S. Fakhari, and S.~Yassemi.
\newblock Regularity of powers of edge ideal of whiskered cycles.
\newblock {\em Comm. Algebra}, 45(3):1246--1259, 2017.

\bibitem{amir}
A.~Mousivand.
\newblock Algebraic properties of product of graphs.
\newblock {\em Comm. Algebra}, 40(11):4177--4194, 2012.

\bibitem{nevo_peeva}
E.~Nevo and I.~Peeva.
\newblock {$C_4$}-free edge ideals.
\newblock {\em J. Algebraic Combin.}, 37(2):243--248, 2013.

\bibitem{NSY17}
P.~{Norouzi}, S.~A. {Seyed Fakhari}, and S.~{Yassemi}.
\newblock {Regularity of powers of edge ideal of very well-covered graphs}.
\newblock {\em ArXiv e-prints}, July 2017.

\bibitem{selva}
S.~Selvaraja.
\newblock Regularity of powers of edge ideals of product of graphs.
\newblock {\em Journal of Algebra and Its Applications}, 17(6):1850128, 2018.

\bibitem{Fakhari}
S.~A. Seyed~Fakhari.
\newblock Symbolic powers of cover ideal of very well-covered and bipartite
  graphs.
\newblock {\em Proc. Amer. Math. Soc.}, 146(1):97--110, 2018.

\end{thebibliography}
\end{document}